\newcommand{\intav}[1]{\mathchoice {\mathop{\vrule width 6pt height 3 pt depth  -2.5pt
\kern -8pt \intop}\nolimits_{\kern -6pt#1}} {\mathop{\vrule width
5pt height 3  pt depth -2.6pt \kern -6pt \intop}\nolimits_{#1}}
{\mathop{\vrule width 5pt height 3 pt depth -2.6pt \kern -6pt
\intop}\nolimits_{#1}} {\mathop{\vrule width 5pt height 3 pt depth
-2.6pt \kern -6pt \intop}\nolimits_{#1}}}
\def\today{\ifcase\month\or
  January\or February\or March\or April\or May\or June\or
  July\or August\or September\or October\or November\or December\fi
  \space\number\day, \number\year}
\DeclareMathOperator{\supp}{\mathrm{supp}}
 \newtheorem{theorem}{Theorem}
 \newtheorem{lemma}[theorem]{Lemma}
 \theoremstyle{definition}
 \theoremstyle{remark}
 \newcommand{\mc}{\mathcal}
 \newcommand{\C}{\mathbb{C}}
 \newcommand{\R}{\mathbb{R}}
 \newcommand{\N}{\mathbb{N}}
 \newcommand{\Z}{\mathbb{Z}}
 \newcommand{\dx}{\text{\rm d}x}
 \newcommand{\dy}{\text{\rm d}y}
\newcommand{\dxi}{\text{\rm d}\xi}
\begin{document}

\title[On the variation of maximal operators of convolution type]{On the variation of maximal operators of convolution type}
\author[E. Carneiro and B. F. Svaiter]{Emanuel Carneiro* and Benar F. Svaiter}
\date{\today}
\subjclass[2000]{42B25, 46E35, 35K08}
\keywords{Maximal functions, heat flow, Poisson kernel, Sobolev spaces, regularity, bounded variation, discrete operators.}
\thanks{*corresponding author email: carneiro@impa.br}
\address{IMPA - Estrada Dona Castorina, 110, Rio de Janeiro, RJ, Brazil 22460-320}
\email{carneiro@impa.br}
\email{benar@impa.br}
\allowdisplaybreaks
\numberwithin{equation}{section}

\maketitle

\begin{abstract}
In this paper we study the regularity properties of two maximal operators of convolution type: the heat flow maximal operator (associated to the Gauss kernel) and the Poisson maximal operator (associated to the Poisson kernel). In dimension $d=1$ we prove that these maximal operators do not increase  the $L^p$-variation of a function for any $p \geq 1$, while in dimensions $d>1$ we obtain the corresponding results for the $L^2$-variation. Similar results are proved for the discrete versions of these operators.
\end{abstract}

\section{Introduction}
\subsection{Background} Let $\varphi \in L^1(\R^d)$ be a nonnegative function such that 
$$\int_{\R^d} \varphi(x)\,\dx =1.$$
We let $\varphi_t(x) = t^{-d} \varphi(t^{-1}x)$ and consider the maximal operator $\mc{M}_{\varphi}$ associated to this approximation of the identity
\begin{equation}\label{Conv_type_max}
\mc{M}_{\varphi}f(x) = \sup_{t>0}\big(|f|*\varphi_t\big)(x).
\end{equation}
The Hardy-Littlewood maximal function, henceforth denoted by $M$, occurs when we consider $\varphi(x) = (1/m(B_1)) \chi_{B_1}(x)$, where $B_1$ is the $d$-dimensional ball centered at the origin with radius $1$ and $m(B_1)$ is its Lebesgue measure. In a certain sense, one could say that $M$ controls other such maximal operators of convolution type. In fact, if our $\varphi$ admits a radial non-increasing majorant in $L^1(\R^d)$ with integral $A$, from \cite[Chapter III, Theorem 2]{S} we know that
\begin{equation}\label{orig_bound}
\mc{M}_{\varphi}f(x) \leq A \, Mf(x)
\end{equation}
for all $x \in \R^d$ and thus we obtain the boundedness of $\mc{M}_{\varphi}$ from  $L^p(\R^d)$ to $L^p(\R^d)$ if $p>1$, and from $L^1(\R^d)$ to $L^{1,\infty}(\R^d)$ in the case $p=1$.

\medskip

Over the last years there has been considerable effort in understanding the effects of the Hardy-Littlewood maximal operator $M$, and some of its variants, in Sobolev functions. In \cite{Ki} Kinnunen showed that $M:W^{1,p}(\R^d) \to W^{1,p}(\R^d)$ is bounded for $p>1$. The paradigm that an $L^p$-bound implies a $W^{1,p}$-bound was later extended to a local version of $M$ in \cite{KL}, to a fractional version in \cite{KiSa} and to a multilinear version in \cite{CM}. The continuity of $M:W^{1, p}(\R^d) \to W^{1, p}(\R^d)$ for $p>1$ was established by Luiro in \cite{Lu1}. When $p=1$ the issues become more subtle. The question on whether the operator $f\mapsto \nabla M f$ is bounded from $W^{1,1}(\R^d)$ to $L^1(\R^d)$, posed by Haj\l asz and Onninen in \cite{HO}, remains open in its general case (see also \cite{HM}). Partial progress was achieved in the discrete setting in the work \cite{BCHP} for dimension $d=1$ and in the work \cite{CH} for general dimension $d>1$. In the continuous setting the only progress has been in dimension $d=1$. For the right (or left) Hardy-Littlewood maximal operator, which we call here $M_r$ (corresponding to $\varphi(x) = \chi_{[0,1]}(x)$ in \eqref{Conv_type_max}) Tanaka \cite{Ta} was the first to observe that, if $f \in W^{1,1}(\R)$, then $M_rf$ has a weak derivative and 
\begin{equation}\label{Intro2.1}
\|(M_rf)'\|_{L^1(\R)} \leq \|f'\|_{L^1(\R)},
\end{equation}
which led to the bound for the {\it non-centered} Hardy-Littlewood maximal operator $\widetilde{M}$,
\begin{equation}\label{Intro2}
\|(\widetilde{M}f)'\|_{L^1(\R)} \leq 2 \|f'\|_{L^1(\R)}.
\end{equation}
This was later refined by Aldaz and Perez-L\'{a}zaro \cite{AP} who obtained, under the assumption that $f$ is of bounded variation on $\R$, that $\widetilde{M}f$ is absolutely continuous and
\begin{equation}\label{Intro3.1}
V(\widetilde{M}f) \leq  V(f),
\end{equation}
where $V(f)$ denotes here the total variation of $f$. More recently, in the remarkable work \cite{Ku}, Kurka showed that if $f$ is of bounded variation on $\R$, then
\begin{equation}\label{Intro3}
V(Mf) \leq C \,V(f),
\end{equation}
for a certain $C>1$ (see \cite{Te} for the discrete analogue).

\medskip

In this paper we turn our attention to understanding the action of a general maximal operator of convolution type \eqref{Conv_type_max} in a Sobolev function. One can show that the original argument of Kinunnen \cite{Ki}, that proves that $M: W^{1,p}(\R^d) \to W^{1,p}(\R^d)$ is a bounded operator for $p>1$, can be applied to a general $\mc{M}_{\varphi}$ of type \eqref{Conv_type_max} that admits a radial non-increasing integrable majorant. In this case, if $f \in W^{1,p}(\R^d)$ with $p>1$, we have that $\nabla \mc{M}_{\varphi}f$ exists in the weak sense and
\begin{equation}\label{Intro4}
\|\nabla \mc{M}_{\varphi}f \|_p \leq C\, \|\nabla f \|_p,
\end{equation}
for a certain constant $C>1$. Therefore, from  \eqref{orig_bound} and \eqref{Intro4}, we already know that $\mc{M}_{\varphi}:W^{1, p}(\R^d) \to W^{1, p}(\R^d)$ is bounded for $p>1$. 

\medskip

We want to explore here the question on whether a maximal operator of convolution type can increase the variation (or $L^p$-variation) of a function. In other words, can we prove inequalities like \eqref{Intro3} (for $d=1$) and \eqref{Intro4} (for $p\geq 1$) with the constant $C=1$? To our knowledge, the only known {\it variation diminishing} bounds for maximal operators are the ones given in \eqref{Intro2.1} and \eqref{Intro3.1}, and the ones given in the recent work of Aldaz, Colzani and P\'{e}rez-L\'{a}zaro \cite{ACP} that show that the Lipschitz constant of a function (or H\"{o}lder constant) actually decreases under the action of the non-centered Hardy-Littlewood maximal function.

\medskip

Here we give a pool of affirmative answers to this question for two classical maximal operators of convolution type: the heat flow maximal operator (associated to the Gauss kernel) and the Poisson maximal operator (associated to the Poisson kernel). We shall obtain positive results for these two maximal operators in dimension $d=1$ for any $p\geq1$, and in dimensions $d>1$ for $p=2$ or $\infty$. We consider also the discrete versions of these operators and prove similar results. We start by reviewing the definitions and main properties of these convolution kernels, and as we move on the proofs, we shall see that the key idea to achieve these results is to explore the nice interplay between the maximal function analysis and the structure of the differential equations associated to these kernels (heat equation for the Gaussian and Laplace's equation for the Poisson kernel). 

\section{Main results}

\subsection{The continuous setting} We start by reviewing the definitions and stating the results in the context of the Euclidean space $\R^d$. All the results presented here hold for complex-valued functions, but for simplicity (since the maximal operators only see the absolute value of a function) we will work with real-valued functions.

\subsubsection{Heat flow maximal operator} Given $u_0 \in L^{p}(\R^d)$, $1\leq p \leq \infty$, we define $u:\R^d \times (0,\infty) \to \R$ by 
\begin{equation*}
u(x,t) = (u_0*K_t)(x)\,,
\end{equation*}
where $K_t$ is the heat kernel given by
\begin{equation}\label{heat}
K_t(x) = \frac{1}{(4\pi t)^{d/2}} e^{-|x|^2/4t}.
\end{equation}
In this case we know that $u \in C^{\infty}(\R^d \times (0,\infty))$ and solves the heat equation 
$$\partial_t u - \Delta u = 0 \ \ \textrm{in} \ \ \R^d \times (0,\infty),  $$
with
$$\lim_{t \to 0^+} u(x,t) = u_0(x) \ \ {\rm a.e.}\ x \in \R^d.$$
We consider here the maximal function associated to this heat flow, henceforth denoted by $*$ to facilitate the notation,
\begin{equation}\label{def_u*}
u^*(x)= \sup_{t >0} \,(|u_0| *K_t)(x).
\end{equation}
We shall prove the following regularity results for this  maximal operator.

\begin{theorem}\label{thm_cont} Let $u^*$ be the heat flow maximal function defined in \eqref{def_u*}. The following propositions hold.
\smallskip
\begin{itemize}
\item[(i)] Let $1 < p \leq \infty$ and $u_0 \in W^{1,p}(\R)$. Then $u^{*} \in W^{1,p}(\R)$ and 
\begin{equation*}
\|(u^*)'\|_p \leq \|u_0'\|_p.
\end{equation*}

\vspace{0.15cm}

\item[(ii)] Let $u_0 \in W^{1,1}(\R)$. Then $u^{*} \in L^{\infty}(\R)$ and has a weak derivative $(u^*)'$ that satisfies 
\begin{equation*}
\|(u^*)'\|_1 \leq \|u_0'\|_1.
\end{equation*}

\vspace{0.15cm}

\item[(iii)] Let $u_0$ be of bounded variation on $\R$. Then $u^*$ is of bounded variation on $\R$ and 
\begin{equation*}
V(u^*) \leq V(u_0).
\end{equation*}

\vspace{0.15cm}

\item[(iv)] Let $d>1$ and $u_0 \in W^{1,p}(\R^d)$, for $p =2$ or $p= \infty$. Then $u^{*} \in W^{1,p}(\R^d)$ and 
\begin{equation*}
\|\nabla u^*\|_p \leq \|\nabla u_0\|_p.
\end{equation*}

\vspace{0.25cm}

\end{itemize}
\end{theorem}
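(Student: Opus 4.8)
The plan is to prove all four parts from a single mechanism: the maximal function $u^*$ is a supremum of solutions to a nice PDE, and at a point where the supremum is attained at a finite time $t>0$, the function $(x,t)\mapsto (|u_0|*K_t)(x)$ has a critical point in $t$, which we can exploit via the heat equation. More precisely, write $w(x,t)=(|u_0|*K_t)(x)$, so $w$ solves $\partial_t w=\Delta w$ on $\R^d\times(0,\infty)$ with initial datum $|u_0|$. Since translation commutes with convolution, for any direction $e$ we have $\partial_{x_j}w(x,t)=((\partial_{x_j}|u_0|)*K_t)(x)$ (interpreting $\partial_{x_j}|u_0|$ in the weak sense via $|u_0|\in W^{1,p}$, using $|\,|u_0|\,|'=\sgn(u_0)u_0'$ a.e.\ in $d=1$, and the analogous statement for $\nabla|u_0|$ with $|\nabla|u_0||\le|\nabla u_0|$ pointwise a.e.). The core observation: if the sup in \eqref{def_u*} is attained at some $t(x)\in(0,\infty)$ where $w$ is differentiable, then $\partial_t w(x,t(x))=0$, hence $\Delta_x w(x,t(x))=0$ as well.

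First I would treat part (iv) as the representative case, $d>1$, $p=2$. The steps: (1) Show $u^*$ is locally bounded and, by the Kinnunen-type argument already available from \eqref{Intro4}, $u^*\in W^{1,2}(\R^d)$ with $\nabla u^*$ existing weakly and satisfying $|\nabla u^*(x)|\le \sup_t |\nabla w(x,t)|$ a.e.; in fact the standard argument gives that for a.e.\ $x$, $\nabla u^*(x)=\nabla w(x,t(x))$ when the sup is attained at finite $t(x)$, and $\nabla u^*(x)=\nabla|u_0|(x)$ on the set where $t(x)=0$ (the "$t\to0$" regime, handled by the contraction property $\|\nabla w(\cdot,t)\|_2\le\|\nabla|u_0|\|_2$ for each fixed $t$). (2) The key inequality: I want $\|\nabla u^*\|_2^2\le\|\nabla|u_0|\|_2^2\le\|\nabla u_0\|_2^2$. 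The second inequality is the pointwise bound $|\nabla|u_0||\le|\nabla u_0|$. For the first, I split $\R^d$ into $A=\{t(x)>0\}$ and $B=\{t(x)=0\}$. On $B$ we have $\nabla u^*=\nabla|u_0|$ directly. On $A$, I use that $\nabla u^*(x)=\nabla w(x,t(x))$ together with an energy/monotonicity argument: for the heat flow, $t\mapsto\|\nabla w(\cdot,t)\|_2^2$ is nonincreasing (since $\frac{d}{dt}\|\nabla w\|_2^2=-2\|\Delta w\|_2^2\le0$). Combined with a suitable selection/measurability argument for $t(x)$ and an application of the layer-cake or a direct integration comparing $\int_A|\nabla w(x,t(x))|^2\dx$ with the energy at appropriate times, I conclude $\|\nabla u^*\|_2^2\le\|\nabla|u_0|\|_2^2$.

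For parts (i)-(iii) ($d=1$) the same skeleton works but now for every $p\ge1$, exploiting that in one dimension the conclusion $\partial_{xx}w(x,t(x))=0$ says $w(\cdot,t(x))$ has an inflection point at $x$, which is the exact analogue of the "$u^*$ is built from pieces that look like the data or are monotone" heuristic behind \eqref{Intro3.1}. Concretely: at a.e.\ $x$ in the good set $A$, $(u^*)'(x)=w_x(x,t(x))=(\sgn(u_0)u_0'*K_{t(x)})(x)$, and one shows the distributional derivative of $u^*$ is a measure controlled in total variation by that of $u_0$ by the following argument — cover $A$ by intervals on which $t(x)$ is (roughly) constant, on each such interval $u^*$ agrees with a translate/dilate-smoothed version of $|u_0|$ whose variation over that interval is at most the variation of $u_0$ over a slightly larger interval, and the "contact set" structure (points where sup is at $t=0$) glues these without extra variation since $u^*\ge w(\cdot,t)$ for all $t$ forces matching one-sided behavior at the endpoints. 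For (i), upgrade to $\|(u^*)'\|_p\le\|u_0'\|_p$ using the per-time contraction $\|w_x(\cdot,t)\|_p=\|\sgn(u_0)u_0'*K_t\|_p\le\|u_0'\|_p$ (Young) on $A$ and $(u^*)'=u_0'$ on $B$; for (ii) this is the $p=1$ endpoint of the same; for (iii) it is the BV statement, deduced from (ii) by the usual approximation of BV data by $W^{1,1}$ data together with lower semicontinuity of total variation.

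**Main obstacle.** The delicate point is the same one that is delicate in \cite{AP} and \cite{Ku}: making rigorous the "the sup is attained at $t(x)$, and there $\partial_t w=0$" step and the gluing across the free boundary between $A$ and $B$. One must show $t(x)$ can be chosen measurably, handle the set where the sup is attained at $t=+\infty$ (only the constant-at-infinity/"$w\to$ mean" degenerate case, excluded when $u_0\in L^p$, $p<\infty$, since then $w(\cdot,t)\to0$), and — most seriously — justify that $(u^*)'$ really equals $w_x(x,t(x))$ a.e.\ and that no extra singular mass is created along $\partial A$. I expect to handle this by first proving everything for smooth, compactly supported $u_0$ (where $t(x)$ is continuous on $A$ by the implicit function theorem applied to $\partial_t w=0$ at nondegenerate contact points, and $u^*$ is Lipschitz), obtaining the clean inequalities there, and then passing to the limit using density and the already-known $W^{1,p}$/$L^\infty$ a priori bounds from \eqref{orig_bound} and \eqref{Intro4}.
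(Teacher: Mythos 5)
Your central mechanism has a genuine gap, and it occurs exactly at the step that carries the whole theorem. For part (iv), from the (Luiro-type) formula $\nabla u^*(x)=\nabla w(x,t(x))$ a.e.\ on $A$ together with the energy monotonicity $\tfrac{d}{dt}\|\nabla w(\cdot,t)\|_2^2=-2\|\Delta w(\cdot,t)\|_2^2\le 0$ you cannot conclude $\int_A |\nabla w(x,t(x))|^2\,\dx\le \|\nabla u_0\|_2^2$: the maximizing time $t(x)$ varies with $x$, and selecting at each $x$ the value of $|\nabla w(x,t)|$ at a favorable time can inflate the integral beyond the energy at any single time. This is precisely why the Kinnunen-type argument recalled in \eqref{Intro4} only gives a constant $C>1$; no ``layer-cake or direct integration'' bridges the gap, and the pointwise identity $\partial_t w(x,t(x))=0$, hence $\Delta_x w(x,t(x))=0$, holding at the single point $x$ carries essentially no usable information. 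The analogous step in $d=1$ is also unsupported: $u^*$ does not coincide with any single $w(\cdot,t)$ on an interval where $t(x)$ is ``roughly constant'', and the variation of $|u_0|*K_t$ over a subinterval is not controlled by the variation of $u_0$ over a slightly larger interval, since convolution is nonlocal; so the proposed covering-and-gluing argument for (i)--(iii) does not close either.

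What the paper does instead is a set-level, not pointwise, use of the PDE: by the parabolic maximum principle, $u^*$ itself (the supremum) is shown to be subharmonic on the open detachment set $A=\{u^*>u_0\}$ (Lemma \ref{lem5}). For $p=2$ and any $d$, an integration-by-parts lemma (Lemma \ref{lem6}: $\int \nabla f\cdot\nabla g\,\dx\le 0$ when $f$ is subharmonic on $\{g>0\}$), applied with $f=u^*$ and $g=u^*-u_0$ after reducing to Lipschitz data (Lemma \ref{lem7}), gives $\|\nabla u_0\|_2^2\ge\|\nabla u^*\|_2^2$ by expanding the square; $p=\infty$ follows from the elementary fact that each $u_0*K_t$, hence the supremum, has Lipschitz constant at most ${\rm Lip}(u_0)$. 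For $d=1$, subharmonicity means convexity of $u^*$ on each component $(\alpha_j,\beta_j)$ of $A$ with $u^*=u_0$ at the endpoints, which is then exploited by a Zorn's-lemma segment-cutting argument for $1<p<\infty$, a Tanaka-style monotonicity decomposition interval by interval for $p=1$, and a partition-refinement argument (after mollifying $u_0$ by $K_\varepsilon$) for BV; only this last reduction resembles your plan for (iii). To salvage your framework you would have to replace the energy-monotonicity/selection step by an argument of this set-level type; as written, the proof of the constant-one inequality is missing in all four parts.
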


\subsubsection{Poisson maximal operator} Given $u_0 \in L^{p}(\R^d)$, $1\leq p \leq \infty$, we now define $u:\R^d \times (0,\infty) \to \R$ by 
\begin{equation*}
u(x,y) = (u_0*P_y)(x)\,,
\end{equation*}
where $P_y$ is the Poisson kernel for the upper half-space given by
\begin{equation}\label{Poisson}
P_y(x) = c_d  \,\frac{y}{(|x|^2 + y^2)^{(d+1)/2}},
\end{equation}
with 
$$c_d = \frac{\Gamma\left(\frac{d+1}{2}\right)}{\pi^{(d+1)/2}}.$$
In this case we know that $u \in C^{\infty}(\R^d \times (0,\infty))$ and that it solves Laplace's equation 
$$\Delta u = 0 \ \ \textrm{in} \ \ \R^d \times (0,\infty),  $$
where here we take the Laplacian with respect to the $(d+1)$ coordinates of $(x,y) = (x_1,x_2,...,x_d,y)$, as opposed to the notation used in the heat flow case, when we wrote the Laplacian only on the $x$-variable. Note also that
$$\lim_{y \to 0^+} u(x,y) = u_0(x) \ \ {\rm a.e.}\ x \in \R^d.$$
In other words, $u(x,y)$ is the harmonic extension of $u_0$ to the upper half-space. We consider here the maximal function associated to the Poisson kernel (henceforth denoted by the star $\star$, slightly different from the heat flow case)
\begin{equation}\label{def_u_star}
u^{\star}(x)= \sup_{y>0} \,(|u_0| *P_y)(x).
\end{equation}
With respect to this maximal operator we shall prove the following regularity results, in analogy with Theorem \ref{thm_cont}.

\begin{theorem}\label{thm_Poisson} Let $u^\star$ be the Poisson maximal function defined in \eqref{def_u_star}. The following propositions hold.
\smallskip
\begin{itemize}
\item[(i)] Let $1 < p \leq \infty$ and $u_0 \in W^{1,p}(\R)$. Then $u^{\star} \in W^{1,p}(\R)$ and 
\begin{equation*}
\|(u^\star)'\|_p \leq \|u_0'\|_p.
\end{equation*}

\vspace{0.15cm}

\item[(ii)] Let $u_0 \in W^{1,1}(\R)$. Then $u^{\star} \in L^{\infty}(\R)$ and has a weak derivative $(u^\star)'$ that satisfies 
\begin{equation*}
\|(u^\star)'\|_1 \leq \|u_0'\|_1.
\end{equation*}

\vspace{0.15cm}

\item[(iii)] Let $u_0$ be of bounded variation on $\R$. Then $u^\star$ is of bounded variation on $\R$ and 
\begin{equation*}
V(u^\star) \leq V(u_0).
\end{equation*}

\vspace{0.15cm}

\item[(iv)] Let $d>1$ and $u_0 \in W^{1,p}(\R^d)$, for $p =2$ or $p= \infty$. Then $u^{\star} \in W^{1,p}(\R^d)$ and 
\begin{equation*}
\|\nabla u^\star\|_p \leq \|\nabla u_0\|_p.
\end{equation*}

\vspace{0.15cm}

\end{itemize}
\end{theorem}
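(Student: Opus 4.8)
The plan is to follow the architecture of the proof of Theorem~\ref{thm_cont}, with Laplace's equation in the variables $(x,y)$ playing the role of the heat equation. Since $u^\star$ depends only on $|u_0|$ and $\||u_0|'\|_p\le\|u_0'\|_p$, $V(|u_0|)\le V(u_0)$, we may assume $u_0\ge0$; and by truncating, mollifying and passing to the limit (via Fatou's lemma and the lower semicontinuity of the relevant norms, in the manner of \cite{Ki,Ta,AP}) it is enough to prove the gradient bounds for $u_0\in C_c^\infty(\R^d)$ with $u_0\ge0$, $u_0\not\equiv0$. For such $u_0$, $u(x,y)=(u_0\ast P_y)(x)$ is smooth and strictly positive on $\R^d\times(0,\infty)$, satisfies $\nabla_x u(\cdot,y)=\nabla u_0\ast P_y$, and $u(x,y)\to0$ as $|x|+y\to\infty$; hence for every $x$ the supremum in \eqref{def_u_star} is attained at some height $y^\ast(x)\in[0,\infty)$ — equal to $0$ exactly on the contact set $E=\{x:u^\star(x)=u_0(x)\}$, and lying in $(0,\infty)$ on the open set $\Omega=\R^d\setminus E$, where necessarily $\partial_y u(x,y^\ast(x))=0$ and $\partial_{yy}u(x,y^\ast(x))\le0$.

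Each $u(\cdot,y)$ is Lipschitz with constant at most $\|\nabla u_0\|_\infty$, since $\|\nabla u_0\ast P_y\|_\infty\le\|\nabla u_0\|_\infty$; hence so is $u^\star$, and in particular $u^\star$ is differentiable a.e. Comparing $u^\star(x')\ge u(x',y^\ast(x))$, which is an equality at $x'=x$, gives the envelope identity
\[
\nabla u^\star(x)=\nabla_x u\bigl(x,y^\ast(x)\bigr)=\bigl(\nabla u_0\ast P_{y^\ast(x)}\bigr)(x)\qquad\text{for a.e.\ }x,
\]
which on $E$ reduces to $\nabla u^\star=\nabla u_0$ a.e. Everything is thereby reduced to bounding $\int_{\R^d}\bigl|\nabla_x u(x,y^\ast(x))\bigr|^p\,\dx$. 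The case $p=\infty$ (any $d$) now follows at once: since $\|P_y\|_1=1$, the envelope identity forces $|\nabla u^\star(x)|\le\|\nabla u_0\|_\infty$ pointwise, so $u^\star\in W^{1,\infty}$ with the asserted bound.

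For $d=1$ and $1\le p<\infty$ the decisive input is again harmonicity of $u$. Differentiating $u^\star(x)=u(x,y^\ast(x))$ twice on $\Omega$ and using that $\partial_y u$ vanishes identically along the graph of $y^\ast$, that $u_{xx}=-u_{yy}$, and that $u_{yy}(x,y^\ast(x))\le0$, one obtains $(u^\star)''\ge0$ on $\Omega$; thus $u^\star$ is \emph{convex} on each connected component $(a_j,b_j)$ of $\Omega$, with one-sided derivatives $(u^\star)'(a_j^{+})\ge u_0'(a_j)$ and $(u^\star)'(b_j^{-})\le u_0'(b_j)$. Granting this, I would prove the interval-wise estimate
\[
\int_{a_j}^{b_j}|(u^\star)'|^p\,\dx\ \le\ \int_{a_j}^{b_j}|u_0'|^p\,\dx
\]
by a co-area/level-set comparison playing the convexity of $u^\star$ against the two constraints $u^\star\ge u_0$ on $(a_j,b_j)$ and $u^\star=u_0$ at the endpoints, in the spirit of \cite{AP}; summing over $j$ and adding the contribution of $E$, where $(u^\star)'=u_0'$ a.e., yields parts (i)--(iii). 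I expect this interval-wise inequality to be the main obstacle: the convexity of $u^\star$ and the fact that it carries the same boundary data as $u_0$ have to be used together, since the crude estimates (the pointwise bound $|(u^\star)'|\le\max(|u_0'(a_j)|,|u_0'(b_j)|)$, or Jensen's inequality) lose too much.

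Finally, for $d>1$ and $p=2$, I would argue by ``Fubini in the height variable''. From $|\nabla u^\star(x)|^2-|\nabla u_0(x)|^2=\int_0^{y^\ast(x)}\partial_y|\nabla_x u(x,y)|^2\,\dy$, integration in $x$ and Fubini give, with $A_y:=\{x:y^\ast(x)>y\}$,
\[
\int_{\R^d}|\nabla u^\star|^2-\int_{\R^d}|\nabla u_0|^2=\int_0^\infty\Bigl(\int_{A_y}\partial_y|\nabla_x u(x,y)|^2\,\dx\Bigr)\dy .
\]
Since $\Delta_x u=-u_{yy}$, there is the pointwise identity $\partial_y|\nabla_x u|^2=2\,\mathrm{div}_x\!\bigl(u_y\,\nabla_x u\bigr)+\partial_y\bigl(u_y^2\bigr)$, and because $\partial_y u$ vanishes on $\partial A_y\subseteq\{x:y^\ast(x)=y\}$ (the defining property of the interior maximizer), the divergence term integrates to zero over $A_y$. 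Undoing Fubini then gives
\[
\int_{\R^d}|\nabla u^\star|^2-\int_{\R^d}|\nabla u_0|^2=\int_{\R^d}\Bigl(u_y(x,y^\ast(x))^2-u_y(x,0)^2\Bigr)\dx=-\int_{\Omega}u_y(x,0)^2\,\dx\ \le\ 0,
\]
once more because $u_y(x,y^\ast(x))=0$ on $\Omega$. The remaining difficulty here is technical rather than conceptual: one must know that $y^\ast$, the level sets $\{y^\ast=y\}$ and the sets $A_y$ are regular enough (via Sard's theorem and the real-analyticity of $y\mapsto u(x,y)$, or after a preliminary smoothing of the maximal operator) for Fubini and the divergence theorem to be applicable and for the boundary terms at infinity to be discarded — which is precisely where the reduction to $u_0\in C_c^\infty$ is used.
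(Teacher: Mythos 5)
Your skeleton (reduce to nice data; show $u^\star$ is convex/subharmonic on the set where it detaches from $u_0$; convert that into the gradient bounds) matches the paper's, but at each of the three decisive points the argument has a genuine gap. First, you get convexity/subharmonicity on $\Omega$ by differentiating $u^\star(x)=u(x,y^\ast(x))$ twice along a maximizing height; this presupposes that $y^\ast$ is single-valued and differentiable and that $u_{yy}<0$ at the maximizer, none of which is available ($y^\ast$ may be multivalued and jump, and the implicit function theorem can degenerate). The paper's Lemma \ref{lem10} reaches the same conclusion with no regularity of the maximizer whatsoever, using only the mean value property of the harmonic extension on balls contained in $A\times(0,\infty)$ (a ``weighted'' subharmonicity) and a maximum-principle comparison with the harmonic replacement; some argument of that robustness is required. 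Second, for $d=1$ and $1<p<\infty$ the step you yourself single out as the main obstacle --- the interval-wise bound $\int_{a_j}^{b_j}|(u^\star)'|^p\le\int_{a_j}^{b_j}|u_0'|^p$ --- is exactly where the work lies, and you do not prove it. (It is in fact true: replace $u_0$ on $[a_j,b_j]$ by its convex envelope, which only decreases $\int|u_0'|^p$ by Jensen on the affine pieces, and then observe that of two convex functions with equal endpoint values the smaller one has the more spread-out derivative, so its derivative majorizes the other's in the Hardy--Littlewood--P\'olya sense and Karamata's inequality finishes; but such an argument must be supplied.) The paper avoids any interval-wise claim: it runs a Zorn's-lemma argument on the family of functions squeezed between $u_0$ and $u^\star$ with controlled Lipschitz constant and $\|f'\|_p$, and defeats a putative maximal element $g\neq u^\star$ by replacing it with its own chord on a carefully chosen subinterval, where Jensen's inequality --- which you dismiss as too lossy --- is precisely what closes the contradiction, because the comparison there is against the chord of $g$, not against $u_0$.

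Third, reducing to $u_0\in C_c^\infty$ and passing to the limit cannot give part (ii) as stated: pointwise limits with uniformly bounded $\|(u_\varepsilon^\star)'\|_1$ only yield a bounded-variation bound, and the absolute continuity needed for an $L^1$ weak derivative is lost in such limits; the paper proves (ii) directly, Tanaka-style, via continuity of $u^\star$, convexity on each component of $A$, and the explicit weak derivative $\chi_{A^c}u_0'+\chi_A v$. The approximation step itself also needs care: the paper mollifies with the kernel itself, so that by the semigroup property $u_\varepsilon^\star=\sup_{y>\varepsilon}(u_0\ast P_y)$ converges pointwise to $u^\star$ for free (Lemma \ref{lem7}); with a generic mollifier and truncation you still owe a proof that the maximal functions of the regularized data converge to $u^\star$. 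Finally, for $d>1$, $p=2$, your divergence-theorem computation over $A_y=\{y^\ast>y\}$ requires $\partial A_y$ to be regular enough for the divergence theorem and requires $u_y=0$ on $\partial A_y$; neither follows from Sard's theorem (which does not apply to the non-smooth, generally discontinuous $y^\ast$), and the inclusion $\partial A_y\subseteq\{y^\ast=y\}$ can fail. The paper's route --- subharmonicity of $u^\star$ on $A$ plus Lemma \ref{lem6} applied to $f=u^\star$ and the Lipschitz function $g=u^\star-u_0\ge 0$, with $u^\star\in W^{1,2}$ known a priori from the Kinnunen-type bound \eqref{Intro4} --- is rigorous and bypasses all level-set regularity; you should either adopt it or supply the missing geometric-measure-theoretic work.
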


\subsection{The discrete setting} Again, all the results presented in this section hold for complex-valued functions, but for simplicity (since the discrete maximal operators only see the absolute value of a function) we will keep working with real-valued functions. For a bounded discrete function $f:\Z^d \to \R$ we define
$$\partial_{x_i} f(n) := f(n+e_i) - f(n),$$
where $n \in \Z^d$ and $e_i  = (0,. . .,1,. . . ,0)$ is the canonical $i$-th base vector with $1$ in the position $i$. The discrete gradient is then the vector
$$\nabla f(n)  = \left(\partial_{x_i} f(n), \partial f_{x_2}(n), . . . ,  \partial_{x_d} f(n)\right).$$
We let 
$$\|f\|_p = \left(\sum_{n \in \Z^d} |f(n)|^p\right)^{1/p},$$
if $1 \leq p < \infty$ and 
$$\|f\|_{\infty} = \sup_{n\in \Z^d} |f(n)|.$$
We define the $l^p(\Z^d)$-norms of the discrete function $n \mapsto |\nabla f(n)|$ in an analogous way (here $|\cdot|$ will always denote the usual Euclidean norm in $\R^d$). For a discrete function $f:\Z^d \to \R$ we define its {\it discrete Laplacian} $\Delta f:\Z^d \to \R$ as
$$\Delta f(n) = \frac{1}{2d} \sum_{\|m-n\|_{1}=1} \big[f(m)- f(n)\big],$$
where $\|n\|_1 = |n_1| + |n_2| + . . . + |n_d|$, if $n = (n_1, n_2,. . ., n_d) \in \Z^d$.

\subsubsection{Discrete heat flow maximal operator}  For a bounded function $u_0:\Z^d \to \R$, the {\it heat flow} in $\Z^d$ with initial condition $|u_0|$ is the unique (bounded and $C^1$ in time) solution $u: \Z^d \times [0,\infty) \to \R$ of  the differential equation
\begin{equation*}
\left\{ 
\begin{array}{rl}
\partial_t u (n,t)  &= \Delta u(n,t);\\
\\
u(n,0)&= |u_0(n)|.
\end{array}
\right.
\end{equation*}
This solution can be given in terms of a convolution with the discrete heat kernel (see for instance \cite{KN} and the references therein)
\begin{equation*}
u(n,t) = \big(|u_0|*\mc{K}_t \big)(n) = \sum_{m\in \Z^d} |u_0(n-m)| \,\mc{K}_t(m),
\end{equation*} 
where 
$$\mc{K}_t(m) = e^{-t}\,\prod_{i=1}^{d} I_{m_i}(t/d),$$
for $m = (m_1, m_2, . . . , m_d) \in \Z^d$, and where $I_k$, for an integer $k\geq 0$ and complex $z$, is the $I$-Bessel function defined by 
\begin{equation*}
I_k(z) = \sum_{j=0}^{\infty} \frac{(z/2)^{2j + k}}{j! \, (j +k)!} = \frac{1}{\pi} \int_0^{\pi} e^{z \cos \theta}\,\cos k\theta\, {\rm d} \theta,
\end{equation*}
and for a negative integer $k$ we put $I_k: = I_{-k}$. We note here that $I$ satisfies the differential equation
$$I_{k+1}(z) + I_{k-1}(z) = 2 I_{k}'(z),$$
which plainly implies that 
$$\partial_t \mc{K}_t(m)  = \Delta \mc{K}_t(m).$$
Moreover, for any $\theta \in \R$ and $z \in \C$, we also have \cite[Lemma 7]{KN}
$$\sum_{k = -\infty}^{\infty} \big[e^{-z}\,I_k(z)\big]\, e^{-i \theta k} = e^{z(\cos \theta - 1)},$$
which gives 
$$\|\mc{K}_t\|_{l^1(\Z^d)} = \sum_{m \in \Z^d} \mc{K}_t(m) = 1.$$
We then define the discrete heat flow maximal operator by 
\begin{equation}\label{def_u*_disc}
u^*(n) = \sup_{t >0} u(n,t) = \sup_{t>0} (|u_0|*\mc{K}_t \big)(n).
\end{equation}

Our next result shows that this maximal operator does not increase the $l^p$-variation in two situations: (i) in dimension $d=1$, for any $1\leq p\leq\infty$; (ii)  in dimension $d>1$, for $p=2$.

\begin{theorem}\label{thm_disc} Let $u^*$ be the discrete heat flow maximal function defined in \eqref{def_u*_disc}. The following propositions hold.

\vspace{0.1cm}

\begin{itemize}
\item[(i)] Let $1 \leq p \leq \infty$ and $u_0:\Z \to \R$ be a bounded discrete function such that $\|u_0'\|_p < \infty$. Then
$$\|(u^*)'\|_p \leq \|u_0'\|_p.$$

\vspace{0.15cm}

\item[(ii)] Let $u_0:\Z^d \to \R$ be a bounded discrete function such that $\|\nabla u_0\|_2 < \infty$. Then
$$\|\nabla u^*\|_2 \leq \|\nabla u_0\|_2.$$

\vspace{0.25cm}
\end{itemize}
\end{theorem}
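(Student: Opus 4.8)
The plan is to exploit two features of the discrete heat flow: each time slice $u(\cdot,t)=|u_0|*\mc{K}_t$ is an $l^p$‑contraction of $|u_0|$ (because $\|\mc{K}_t\|_{l^1(\Z^d)}=1$), and at a point where the supremum defining $u^*$ is attained, the equation $\partial_t u=\Delta u$ forces a subharmonicity relation for $u^*$. These will be combined in two genuinely different ways for the two parts. As a common set‑up, note first that differences commute with convolution, $\partial_{x_i}u(\cdot,t)=(\partial_{x_i}|u_0|)*\mc{K}_t$, so Young's inequality together with $\big||u_0(n+e_i)|-|u_0(n)|\big|\le|\partial_{x_i}u_0(n)|$ gives $\|\partial_{x_i}u(\cdot,t)\|_p\le\|\partial_{x_i}u_0\|_p$ for every $t$. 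To avoid worrying about attainment of the supremum, I would prove everything first for the truncated maximal functions $u^*_\delta(n)=\sup_{0<t\le1/\delta}u(n,t)$ and then let $\delta\to0^+$, using $u^*_\delta\uparrow u^*$ pointwise and Fatou's lemma for the discrete gradient. For fixed $\delta$, put $C_\delta=\{u^*_\delta=|u_0|\}$, $U_\delta=\{u^*_\delta>|u_0|\}$. If $n\in U_\delta$, then $t\mapsto u(n,t)$ attains its maximum over $[0,1/\delta]$ at some $t_n\in(0,1/\delta]$, so $\partial_t u(n,t_n)\ge0$, hence $\Delta u(n,t_n)\ge0$, hence $u^*_\delta(n)=u(n,t_n)\le\tfrac1{2d}\sum_{\|m-n\|_1=1}u(m,t_n)\le\tfrac1{2d}\sum_{\|m-n\|_1=1}u^*_\delta(m)$; thus $\Delta u^*_\delta\ge0$ on $U_\delta$, while $u^*_\delta=|u_0|$ on $C_\delta$ and $u^*_\delta\ge|u_0|$ everywhere.

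For part (i), in dimension one $\Delta u^*_\delta\ge0$ on $U_\delta$ means exactly that $u^*_\delta$ is convex on the closure of each maximal run of consecutive points of $U_\delta$. On such a block with endpoints $a,b\in C_\delta$ (or $\pm\infty$), I will use the elementary inequality that if $h$ is convex on $\{a,\dots,b\}$ with $h(a)=g(a)$, $h(b)=g(b)$ and $h\ge g$, then $\sum_{n=a}^{b-1}|h(n+1)-h(n)|^p\le\sum_{n=a}^{b-1}|g(n+1)-g(n)|^p$ for all $1\le p\le\infty$, taking $g=|u_0|$. This follows by majorization: convexity makes the increments of $h$ non‑decreasing, and $h\ge g$ with matching endpoint values forces every left partial sum of the increments of $h$ to dominate the corresponding partial sum for $g$ (with equality for the full sum), so the increments of $g$ majorize those of $h$ and Karamata's inequality applied to the convex function $t\mapsto|t|^p$ gives the claim ($p=\infty$ by letting $p\to\infty$). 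Since the derivative indices attached to distinct blocks are disjoint and for every remaining index $n$ one has $n,n+1\in C_\delta$ so the increments of $u^*_\delta$ and of $|u_0|$ coincide, summing over all blocks yields $\|(u^*_\delta)'\|_p\le\||u_0|'\|_p\le\|u_0'\|_p$, and $\delta\to0^+$ concludes.

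For part (ii) I would instead argue by a Dirichlet‑energy computation valid for $p=2$. Write $\phi:=u^*_\delta-|u_0|\ge0$, so $\phi=0$ on $C_\delta$ and $\Delta u^*_\delta\ge0$ on $\{\phi>0\}$, whence $\phi\,\Delta u^*_\delta\ge0$ on all of $\Z^d$. Using the summation‑by‑parts identity $\langle\nabla a,\nabla b\rangle=-2d\,\langle a,\Delta b\rangle$ one obtains
\begin{equation*}
\|\nabla u^*_\delta\|_2^2-\|\nabla|u_0|\|_2^2=2\langle\nabla\phi,\nabla u^*_\delta\rangle-\|\nabla\phi\|_2^2=-4d\,\langle\phi,\Delta u^*_\delta\rangle-\|\nabla\phi\|_2^2\le0,
\end{equation*}
so $\|\nabla u^*_\delta\|_2\le\|\nabla|u_0|\|_2\le\|\nabla u_0\|_2$, and letting $\delta\to0^+$ finishes.

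The conceptual core above is short; the real work is technical, and I expect two points to cost the most. First, the regularity and limiting bookkeeping: showing that $u^*$ and the truncations have the asserted regularity, that the passage $\delta\to0^+$ is legitimate, and handling infinite detachment blocks (rays), where the majorization argument of part (i) must be replaced by its weak‑majorization version for bounded convex sequences. Second, making the summation by parts in part (ii) rigorous, since $u^*_\delta$ and $\phi$ need not belong to $l^2(\Z^d)$ even when all the gradients in sight do, which forces the use of cut‑offs and a careful passage to the limit.
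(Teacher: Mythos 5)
Your route is genuinely different from the paper's, and most of it works. The paper never truncates in time: it proves a barrier lemma (Lemma \ref{lem3}) by an ODE/Gronwall argument showing that $u^*$ cannot exceed any superharmonic majorant of $u_0$, and then runs a non-constructive Zorn's-lemma argument on the family of functions squeezed between $u_0$ and $u^*$ with controlled gradient norm: a maximal element $g$ that differed from $u^*$ would have $\Delta g(n)>0$ somewhere on the detachment set, and raising $g$ at that single point strictly decreases the (purely local, finitely many terms) $\ell^p$-energy, a contradiction. Your truncation $u^*_\delta=\sup_{0<t\le 1/\delta}u(\cdot,t)$ makes the supremum attained, so the subharmonicity $\Delta u^*_\delta\ge 0$ on $\{u^*_\delta>|u_0|\}$ comes out in two lines from $\partial_t u=\Delta u$ -- cleaner than Lemma \ref{lem3}. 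Your part (i) is correct: on a finite detachment block the left partial sums of the increments of $u^*_\delta$ dominate those of $|u_0|$, the increments of $u^*_\delta$ are sorted by convexity, so they are majorized by the increments of $|u_0|$ and Karamata gives all $1\le p\le\infty$ at once; the ray case (all increments of a bounded convex sequence are $\le 0$, weak supermajorization with $t\mapsto|\min(t,0)|^p$) and the $\delta\to0$ Fatou step are the technicalities you flagged, and they do go through. This buys a constructive, Zorn-free proof of (i).

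Part (ii) as written has a real gap at the summation by parts, and ``cut-offs and a careful passage to the limit'' is not by itself a remedy. With only $\phi=u^*_\delta-|u_0|\in\ell^\infty$ and no a priori knowledge that $\nabla u^*_\delta\in\ell^2$, the boundary term produced by a cutoff on a box of side $R$ is of size $R^{-1}\cdot\#\{\text{annulus}\}\sim R^{d-1}$, and even granting $\nabla u^*_\delta\in\ell^2$ it is only $O\big(R^{d/2-1}\,\|\nabla u^*_\delta\|_{\ell^2(\text{annulus})}\big)$, which need not vanish for $d\ge3$. Note that the hypothesis is only $u_0\in\ell^\infty$ with $\|\nabla u_0\|_2<\infty$, so there is no $\ell^2$ decay anywhere to feed the cutoff. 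The missing ingredient is an a priori bound showing $\phi\in\ell^2(\Z^d)$; your own truncation supplies it: $\sup_{t\le 1/\delta}|u(n,t)-u_0(n)|\le\int_0^{1/\delta}|\Delta u(n,s)|\,\ds$, and $\|\Delta u(\cdot,s)\|_2\le\|\nabla u_0\|_2\,\max_i\|\partial_{x_i}\mc{K}_s\|_{\ell^1}\lesssim\min(1,s^{-1/2})\|\nabla u_0\|_2$, so Minkowski's integral inequality gives $\|\phi\|_2\lesssim(1+\delta^{-1/2})\|\nabla u_0\|_2<\infty$. Once $\phi\in\ell^2$, the discrete $\nabla$ and $\Delta$ are bounded on $\ell^2$, hence $\nabla u^*_\delta,\ \Delta u^*_\delta\in\ell^2$ as well, and the identity $\langle\nabla\phi,\nabla u^*_\delta\rangle=-2d\,\langle\phi,\Delta u^*_\delta\rangle$ follows by approximating $\phi$ in $\ell^2$ by finitely supported functions -- no cutoff geometry needed. (The $\delta$-dependence of these constants is harmless because the final inequality is $\delta$-uniform.) The paper avoids this entire issue because its energy comparison is a one-point computation; if you add the $\ell^2$ lemma above, your global energy argument for (ii) becomes a complete, and again quite different, proof.
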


\subsubsection{Discrete Poisson maximal operator} Recall that the continuous Poisson kernel $P_y(x)$ for the upper half-space defined in \eqref{Poisson} satisfies the semigroup property 
\begin{equation}\label{DP_sem}
P_{y_1} * P_{y_2} = P_{y_1 + y_2}
\end{equation}
for any $y_1, y_2>0$, and its Fourier transform verifies
$$\widehat{P_y}(\xi) = \int_{\R^n} P_y(x)\,e^{-2\pi i x \xi}\,\dx = e^{-2\pi y |\xi|} = \left(\widehat{P_1}(\xi) \right)^y.$$ 
Given a bounded discrete function $u_0:\Z^d \to \R$, we aim to lift this function to a harmonic function on the discrete upper half-space, i.e. we want to construct  $u: \Z^d \times \Z^{+} \to \R$ such that 
\begin{equation}\label{DP_eq-1}
\left\{ 
\begin{array}{rl}
\Delta u(n,y) = 0 &  {\rm in } \  \ \Z^d \times \N;\\
\\
u(n,0)&= u_0(n).
\end{array}
\right.
\end{equation}
Observe that here we use the $(d+1)$-dimensional discrete Laplacian, and that the parameter $y$ is now also discrete. We aim to accomplish this by convolving the initial datum $u_0$ with a certain integrable discrete kernel $\mc{P}_y$ (also denoted below by $\mc{P}(\cdot, y)$) that satisfies the semigroup property \eqref{DP_sem}. Let us proceed with a formal derivation of the kernel $\mc{P}_y$ first. 

\medskip

The function $\mc{P}: \Z^d \times \Z^{+} \to \R$ would have to satisfy
\begin{equation}\label{DP_eq0}
\left\{ 
\begin{array}{rl}
\Delta \mc{P}(n,y) = 0 &  {\rm in } \  \ \Z^d \times \N;\\
\\
\mc{P}(n,0)&= \delta_0(n),
\end{array}
\right.
\end{equation}
where $\delta_0$ is the function that is $1$, if $n=0$, and zero otherwise. Writing the harmonicity condition at the level $y=1$ we have
\begin{equation*}
2(d+1) \mc{P}_1(n) =  \mc{P}_0(n) + \mc{P}_2(n) + \sum_{\|m - n\|_1=1} \mc{P}_1(m).
\end{equation*}
We now multiply the last expression by $e^{-2\pi i  n \xi}$ and sum over $n \in \Z^d$ to get 
\begin{align*}
2(d+1) \widehat{\mc{P}_1}(\xi) & = 1 + \widehat{\mc{P}_2}(\xi) + \widehat{\mc{P}_1}(\xi) \sum_{k=1}^d \big(e^{2\pi i \xi_k} + e^{-2\pi i \xi_k}\big),
\end{align*}
where $\xi  = (\xi_1, \xi_2,..., \xi_d)$ belongs to the torus $\mathbb{T}^d = [-\tfrac12,\tfrac12]^d$. From the semigroup property we have $\widehat{\mc{P}_2}(\xi) = \widehat{\mc{P}_1}(\xi)^2$ and thus
\begin{equation}\label{DP_eq1}
 \widehat{\mc{P}_1}(\xi)^2 - \widehat{\mc{P}_1}(\xi)\left( 2(d+1) - 2\sum_{k=1}^d \cos(2 \pi \xi_k)\right)+ 1 = 0.
\end{equation}
From \eqref{DP_eq1} we conclude that 
\begin{equation}\label{DP_choice}
\widehat{\mc{P}_1}(\xi) = \left( (d+1) - \sum_{k=1}^d \cos(2 \pi \xi_k)\right) \pm \left(\left((d+1) - \sum_{k=1}^d \cos(2 \pi \xi_k)\right)^2 - 1 \right)^{1/2},
\end{equation}
where the choice of signs could (at least in principle) be taken in any measurable way. 

\medskip

We shall pick the negative sign in \eqref{DP_choice} and {\it define} our discrete Posson kernel by the following three expressions:
\begin{equation}\label{DP_eq2}
\widehat{\mc{P}_1}(\xi) := \left( (d+1) - \sum_{k=1}^d \cos(2 \pi \xi_k)\right) - \left(\left((d+1) - \sum_{k=1}^d \cos(2 \pi \xi_k)\right)^2 - 1 \right)^{1/2},
\end{equation}
\begin{equation}\label{DP_eq3}
\widehat{\mc{P}_y}(\xi) := \left(\widehat{\mc{P}_1}(\xi) \right)^y,
\end{equation}
for $y \geq 0$, and
\begin{equation}\label{DP_eq4}
\mc{P}_y(n) = \int_{\mathbb{T}^d} \widehat{\mc{P}_y}(\xi) \,e^{2 \pi i n \xi}\,\dxi.
\end{equation}
Observe from \eqref{DP_eq2} that $0 < \widehat{\mc{P}_1}(\xi)\leq 1$ for all $\xi \in \mathbb{T}^d$, with $\widehat{\mc{P}_1}(\xi) = 1$ if and only if $\xi=0$. Thus, for  $\xi \in \mathbb{T}^d \setminus \{0\}$ the function $y \mapsto \widehat{\mc{P}_y}(\xi)$ is decreasing (and goes to $0$ as $y \to \infty$). From \eqref{DP_eq2}, \eqref{DP_eq3} and \eqref{DP_eq4} it is clear that $\mc{P}: \Z^d \times \Z^{+} \to \R$ satisfies \eqref{DP_eq0}, and therefore the maximum principle holds.

\medskip

From \eqref{DP_eq4} we have $|P_y(n)| \leq 1$ for all $(n,y) \in \Z^d \times \Z^{+}$. We want to show now that we also have $\mc{P}_y(n) \geq 0$ for all $(n,y) \in \Z^d \times \Z^{+}$. For this we use the maximum principle. Given $\varepsilon >0$ observe first that, by Lebesgue dominated convergence, there is a $y_0 >0$ such that 
\begin{equation*}
\big|\mc{P}_{y_0}(n)\big| \leq \int_{\mathbb{T}^d}  \widehat{\mc{P}_{y_0}}(\xi) \,\dxi < \varepsilon
\end{equation*}
for any $n \in \Z^d$. Now, by the Riemann-Lebesgue lemma, there exists a radius $r_0 >0$ such that 
\begin{equation*}
\big|\mc{P}_{y}(n)\big| < \varepsilon
\end{equation*}
for all $0 \leq y \leq y_0$ and $|n| \geq r_0$. When we consider the cylindrical contour of radius $r_0$, delimited by the hyperplanes $y=0$ and $y=y_0$, by the maximum principle we have
\begin{equation}\label{DP_eq5}
\mc{P}_{y}(n) > -\varepsilon
\end{equation}
within this region. Since we could take $y_0$ as large as we wanted (and then $r_0$ large as well) we conclude that \eqref{DP_eq5} actually holds for all $(n,y) \in \Z^d \times \Z^{+}$. Since $\varepsilon>0$ is arbitrary we have
\begin{equation}\label{DP_eq6}
\mc{P}_{y}(n) \geq 0
\end{equation}
for all $(n,y) \in \Z^d \times \Z^{+}$ as desired. 

\medskip

Since $\widehat{\mc{P}_{y}}(0)=1$ for all $y \geq 0$, it follows from \eqref{DP_eq6} (and a standard approximation argument using the smoothing F\'{e}jer kernel) that $\mc{P}_y$ is integrable and 
\begin{equation*}
\sum_{n\in \Z^d} \mc{P}_y(n) = 1
\end{equation*}
for all $y \geq 0$. Therefore, given a bounded discrete function $u_0:\Z^d \to \R$ the function 
$$u(n,y) = u_0 * \mc{P}_y(n) = \sum_{m \in \Z^d} u_0(n-m)\mc{P}_y(m)$$
is well defined, and one now clearly sees that it satisfies \eqref{DP_eq-1}.

\medskip

We now define our discrete Poisson maximal operator (keeping the notation $\star$) by 
\begin{equation}\label{DP_eq7}
u^{\star}(n) = \sup_{y \in \Z^{+}} (|u_0| * \mc{P}_y)(n).
\end{equation}
By a classical result of Stein \cite[Theorem 1]{S1} the maximal operator $\star$ verifies the $l^p$-boundedness
\begin{equation}\label{Stein_lp_bound}
\|u^{\star}\|_{l^p(\Z^d)} \leq C_p \|u\|_{l^p(\Z^d)}
\end{equation}
for $1 < p \leq \infty$. With respect to this maximal function we prove the following regularity properties, in analogy with Theorem \ref{thm_disc}.

\begin{theorem}\label{thm_disc_Poisson} Let $u^\star$ be the discrete Poisson maximal function defined in \eqref{DP_eq7}. The following propositions hold.

\vspace{0.1cm}

\begin{itemize}
\item[(i)] Let $1 \leq p \leq \infty$ and $u_0:\Z \to \R$ be a bounded discrete function such that $\|u_0'\|_p < \infty$. Then
$$\|(u^\star)'\|_p \leq \|u_0'\|_p.$$

\vspace{0.15cm}

\item[(ii)] Let $u_0:\Z^d \to \R$ be a bounded discrete function such that $\|\nabla u_0\|_2 < \infty$. Then
$$\|\nabla u^{\star}\|_2 \leq \|\nabla u_0\|_2.$$

\vspace{0.25cm}
\end{itemize}
\end{theorem}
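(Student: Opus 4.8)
The strategy is to transfer the mechanism behind Theorems \ref{thm_cont}--\ref{thm_Poisson} to the discrete Poisson setting; the structural fact that drives everything is that $u^\star$ is discretely subharmonic off the set where it agrees with the datum. Since $u^\star$ depends only on $|u_0|$, and $\||u_0|'\|_p \le \|u_0'\|_p$ (resp. $\|\nabla|u_0|\|_2 \le \|\nabla u_0\|_2$), we may and do assume $u_0 \ge 0$; then $u^\star(n) \ge u(n,0) = u_0(n)$ for all $n \in \Z^d$. Put $\Omega = \{n \in \Z^d : u^\star(n) > u_0(n)\}$, so that $u^\star = u_0$ on $\Z^d \setminus \Omega$. \emph{Claim: $\sum_{\|m-n\|_1 = 1}(u^\star(m) - u^\star(n)) \ge 0$ for every $n \in \Omega$.} Indeed, fix $n \in \Omega$ and $\varepsilon > 0$; since $u(n,0) = u_0(n) < u^\star(n) = \sup_{y \ge 0} u(n,y)$ there is $y_\varepsilon \ge 1$ with $u(n,y_\varepsilon) > u^\star(n) - \varepsilon$. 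Writing the harmonicity $\Delta u(n,y_\varepsilon) = 0$ for the $(d+1)$-dimensional discrete Laplacian, using $u(n, y_\varepsilon \pm 1) \le u^\star(n) < u(n,y_\varepsilon) + \varepsilon$ for the two vertical neighbours and $u(m,y_\varepsilon) \le u^\star(m)$ for the $2d$ horizontal neighbours (which all sit at the same height $y_\varepsilon$), one gets $\sum_{\|m-n\|_1=1}(u^\star(m) - u^\star(n)) \ge -(2d+2)\varepsilon$; letting $\varepsilon \to 0$ proves the claim. Note that this does not require the supremum defining $u^\star(n)$ to be attained.

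\textbf{Proof of (i).} In dimension one, $\Omega \subseteq \Z$ is a disjoint union of maximal (nonempty) blocks of consecutive integers, and the Claim reads $2u^\star(n) \le u^\star(n-1) + u^\star(n+1)$ for $n \in \Omega$, i.e. $u^\star$ is discretely convex on the closure $[a,b] := \{a, a+1, \dots, b\}$ of each such block (with $a,b \notin \Omega$, possibly infinite). On $[a,b]$ we have $u_0 \le u^\star$ with equality at the finite endpoints, and I would use the following elementary lemma: if $f$ is convex and $g \le f$ on $\{a,\dots,b\}$ with $f(a) = g(a)$ and $f(b) = g(b)$, then the sequence $(g(k+1) - g(k))_{a \le k < b}$ majorises $(f(k+1)-f(k))_{a \le k < b}$, whence $\sum_k |f(k+1) - f(k)|^p \le \sum_k |g(k+1) - g(k)|^p$ for all $1 \le p \le \infty$; the majorisation is verified by comparing the top-$m$ partial sums, which for the convex $f$ equal $f(b) - f(b-m)$ while for $g$ they are at least $g(b) - g(b-m) = f(b) - g(b-m) \ge f(b) - f(b-m)$. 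One then partitions the edges of $\Z$ into those lying inside some block closure $[a_j, b_j]$ (apply the lemma with $f = u^\star$, $g = u_0$) and those with both endpoints in $\Z \setminus \Omega$ (where $(u^\star)' = u_0'$); summing over all edges gives $\|(u^\star)'\|_p \le \|u_0'\|_p$. Infinite blocks (or $\Omega = \Z$) are handled by truncating $[a_j, b_j]$ to $[a_j, M]$ and letting $M \to \infty$, using that a bounded discretely convex sequence is monotone. For $p = \infty$ there is also a one-line argument: if $u_0$ is $L$-Lipschitz, then $u(n+1,y) - u(n,y) = \big((|u_0|(\cdot+1) - |u_0|(\cdot)) * \mc{P}_y\big)(n)$, which has modulus $\le L \|\mc{P}_y\|_1 = L$, so $u(n+1,y) \le u^\star(n) + L$ and, taking $\sup_y$, $|u^\star(n+1) - u^\star(n)| \le L$.

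\textbf{Proof of (ii).} For $p = 2$ I would linearise instead. Set $w = u^\star - u_0 \ge 0$, so $w = 0$ on $\Z^d \setminus \Omega$ and $w > 0$ exactly on $\Omega$. Using the discrete summation by parts $\langle \nabla f, \nabla g\rangle = -2d\,\langle \Delta f, g\rangle$ (where $\langle f, g\rangle = \sum_n f(n) g(n)$ and $\Delta$ is the $d$-dimensional discrete Laplacian) and expanding,
\[
\|\nabla u^\star\|_2^2 - \|\nabla u_0\|_2^2 = 2\langle \nabla u^\star, \nabla w\rangle - \|\nabla w\|_2^2 = -4d\,\langle \Delta u^\star, w\rangle - \|\nabla w\|_2^2 .
\]
Since $w \ge 0$ everywhere and, by the Claim, $\Delta u^\star(n) \ge 0$ for $n \in \Omega = \{w > 0\}$, we have $\langle \Delta u^\star, w\rangle = \sum_{n \in \Omega} \Delta u^\star(n)\, w(n) \ge 0$, so the right-hand side is $\le -\|\nabla w\|_2^2 \le 0$. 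This is the desired inequality (and in passing shows $\nabla u^\star \in \ell^2(\Z^d)$). The same computation in fact re-proves the $p = 2$ case of (i).

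\textbf{Main obstacle.} The conceptual step — reading the subharmonicity of $u^\star$ off the discrete Laplace equation at a near-maximising height — is short. The delicate part is the convergence bookkeeping. In (ii) the summation by parts must be justified without knowing a priori that $u^\star$ or $\nabla u^\star$ is $\ell^2$-summable; I would do this by inserting cutoffs $\eta_R$, estimating $\sum_{|n| \le R} |\nabla u^\star(n)|^2$, and sending $R \to \infty$ (alternatively, proving the bound first for compactly supported $u_0$, for which $u^\star$ decays like $|n|^{-d}$, and then approximating). In (i) one must deal with the unbounded blocks of $\Omega$ and the limiting form of the majorisation lemma. I expect the truncation argument making the energy identity in (ii) rigorous to be the main obstacle.
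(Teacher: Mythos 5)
Your key Claim --- that $\sum_{\|m-n\|_1=1}(u^\star(m)-u^\star(n))\ge 0$ at every $n$ with $u^\star(n)>u_0(n)$, read off from the discrete harmonicity of $u(\cdot,y)$ at a near-maximizing height $y_\varepsilon\ge 1$ --- is correct, and it is essentially the structural fact behind the paper's Lemma \ref{lem7_Poisson_b}, stated directly for $u^\star$ rather than as a comparison principle for an arbitrary majorant $f\ge u_0$. Your part (i) then genuinely departs from the paper: the paper proves both parts by the Zorn's-lemma/maximal-element scheme of Theorem \ref{thm_disc} (local one-point modifications plus the comparison lemma applied to the maximal element), whereas you decompose $\Z$ into the blocks of $\{u^\star>u_0\}$, use discrete convexity of $u^\star$ on each block closure with endpoint equality, and conclude by a Karamata-type majorization of the increments. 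That route is sound and more elementary/explicit in $d=1$; the infinite blocks can indeed be closed by weak majorization of the absolute increments together with the monotonicity of bounded convex sequences, and your separate $p=\infty$ argument via $\mc{P}_y\ge 0$, $\|\mc{P}_y\|_{\ell^1}=1$ is fine.

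The genuine gap is in (ii). The identity $\langle\nabla u^\star,\nabla w\rangle=-2d\,\langle\Delta u^\star,w\rangle$ (and even the finiteness of the terms in your expansion) is not justified under the stated hypotheses: $u_0$ is only bounded with $\|\nabla u_0\|_2<\infty$, so $u^\star$ and $w=u^\star-u_0$ need not decay at all (think of $u_0$ near a nonzero constant), and $\nabla u^\star\in\ell^2$ is precisely the conclusion, not an a priori fact. Neither of your suggested repairs works as stated: with cutoffs $\eta_R$ the commutator/boundary terms are sums over an annulus of $|w|\,|\nabla u^\star|\,|\nabla\eta_R|$, which from the available $\ell^\infty$ bounds is only of size $R^{d-1}$ and does not vanish without exactly the square-summability you are trying to prove; and replacing $u_0$ by $u_0\chi_{B_R}$ destroys the hypothesis, since the truncation adds a boundary contribution of order $\|u_0\|_\infty R^{(d-1)/2}$ to $\|\nabla u_0\|_2$ (for $u_0\in\ell^2$ the identity can be justified via Stein's bound \eqref{Stein_lp_bound}, but the passage from $\ell^2$ data to general bounded data with finite gradient norm is exactly what is missing). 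The paper's argument is designed to avoid this issue: it applies Lemma \ref{lem7_Poisson_b} in its comparison form to a Zorn-maximal element $g$ of the family $\{u_0\le f\le u^\star,\ \|\nabla f\|_p\le\|\nabla u_0\|_p\}$ and modifies $g$ at a single point using strict convexity of the local energy, so no global summation by parts or decay is ever needed; note that for that scheme the lemma must be applied to $g$, and your Claim about $u^\star$ alone does not substitute for it. To complete (ii) you must either supply the missing limiting argument or switch to this local variational scheme.
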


We now move on to the proofs of these results. We shall start with the discrete cases, that are technically simpler but already give a good flavor of the main ideas that shall be used in the continuous cases. The key insight here is that all of these maximal functions have the property of being subharmonic in the set where they disconnect from the original function. This shall be obtained by exploiting the structure of the underlying partial differential equations.

%%%%%%%%%%%%%%%%%%%%%%%%%%%%%%%%%%%%%%%%%%%%%%%%%%%%%%%%%%%%%%%%%%%%%%%%%%%%%%%%%%%%%%%%%%%%%%%%%%%%%%%%%%%%%%%%%%%%%%%%%%%%%%%%%%%%%%%%%%%%%%%%%%%%%%%%

\section{Proof of Theorem \ref{thm_disc} - Discrete heat kernel}

From now on we assume, without loss of generality, that $u_0 \geq0$, since $\|\nabla |u_0|\|_p \leq \|\nabla u_0\|_p$ for any $1\leq p \leq \infty$.

\subsection{Preliminaries}  The essence of the following lemma is the fact that $u^*$ is subharmonic in the set where it disconnects from $u_0$. The statement in the following format will be more convenient later in the proof.

\begin{lemma}\label{lem3}
Let $u_0 \in l^{\infty}(\Z^d)$ and $u^*$ be its discrete heat flow maximal function. Let $f \in l^{\infty}(\Z^d)$ be such that $f \geq u_0$. Let $I = \{n \in \Z^d;\, u^*(n)\leq f(n)\}$ and suppose that $\Delta f(n) \leq 0$ for all $n \in I^c$. Then $I = \Z^d$.
\end{lemma}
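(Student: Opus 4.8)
The plan is to argue by contradiction, using a maximum-principle type argument adapted to the discrete heat semigroup. Suppose $I \neq \Z^d$, so that $I^c = \{n : u^*(n) > f(n)\}$ is nonempty. The idea is to compare, at each fixed time $t > 0$, the heat extension $u(\cdot,t) = |u_0| * \mc{K}_t$ with the test function $f$, and to extract from $I^c$ a point and a time witnessing the strict inequality. First I would record the basic facts I intend to use: $\mc{K}_t \geq 0$, $\|\mc{K}_t\|_{l^1} = 1$, $\partial_t \mc{K}_t = \Delta \mc{K}_t$, hence $u(\cdot,t)$ solves the discrete heat equation with $u(n,0) = u_0(n) = |u_0(n)|$; and the comparison principle for the discrete heat equation, namely that if $w(n,0) \leq 0$ and $\partial_t w(n,t) \le \Delta w(n,t)$ in an appropriate sense, then $w \leq 0$ (this follows from the positivity of the semigroup, i.e. writing $w(\cdot,t)$ via Duhamel). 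Since $f$ is only assumed superharmonic off $I$, not globally, the comparison has to be localized to the region where $u^* \le f$, which is exactly where the hypothesis $\Delta f \le 0$ on $I^c$ will be used.

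The core step is the following claim: for every $n \in \Z^d$ and every $t > 0$ one has $u(n,t) \le f(n)$; taking the supremum over $t$ then gives $u^*(n) \le f(n)$ for all $n$, i.e. $I = \Z^d$, the desired contradiction. To prove the claim, fix $\varepsilon > 0$ and consider $g(n,t) = u(n,t) - f(n) - \varepsilon t$. Since $u_0$ and $f$ are bounded, $g$ is bounded above on $\Z^d \times [0,T]$ for each $T$, and $g(n,t) \to -\infty$ uniformly as $t \to \infty$ is not quite true, so instead I would work on a finite time strip $[0,T]$ and show $\sup g$ cannot be attained at an interior maximum with $g > 0$. Suppose for contradiction that $\sup_{\Z^d \times [0,T]} g > 0$. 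Because $g$ is bounded, pick a point $(n_0,t_0)$ with $g(n_0,t_0)$ within $\delta$ of the supremum, $\delta$ small. At a near-maximum we have $\partial_t g(n_0,t_0) \geq -\delta'$ and $\Delta g(n_0,t_0) \leq \delta'$ for small $\delta'$ (here one must be careful since $\Z^d$ is not compact — this is why I pass to a near-supremum and use boundedness, or alternatively I localize using the decay of $\mc{K}_t$ to reduce to a large box $[-R,R]^d$ where maxima are attained). Then $\partial_t g = \partial_t u - \varepsilon = \Delta u - \varepsilon = \Delta g + \Delta f - \varepsilon$, so $-\delta' \le \partial_t g(n_0,t_0) \le \delta' + \Delta f(n_0) - \varepsilon$, forcing $\Delta f(n_0) \geq \varepsilon - 2\delta' > 0$ for $\delta'$ small. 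By hypothesis this means $n_0 \notin I^c$, i.e. $u^*(n_0) \le f(n_0)$, hence $u(n_0,t_0) \le u^*(n_0) \le f(n_0)$, so $g(n_0,t_0) = u(n_0,t_0) - f(n_0) - \varepsilon t_0 \le -\varepsilon t_0 \le 0$, contradicting $g(n_0,t_0) > \sup g - \delta > 0$ once $\delta$ is chosen smaller than the positive supremum. Letting $\varepsilon \to 0$ and $T \to \infty$ yields $u(n,t) \le f(n)$ for all $n,t$.

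The main obstacle is the non-compactness of $\Z^d$: the ``maximum'' of $g$ need not be attained, so the clean interior-maximum argument must be replaced by a near-supremum argument with careful bookkeeping of the error terms $\delta, \delta'$, or by first truncating to a large box using the rapid decay of $\mc{K}_t(m)$ in $m$ (Bessel function asymptotics) together with the uniform bound $u(n,t) \le \|u_0\|_\infty$ to control boundary contributions. A secondary technical point is justifying the differentiation identity $\partial_t u(n,t) = \Delta u(n,t)$ and the interchange of $\partial_t$ with the sum defining $u(n,t) = \sum_m |u_0(m)| \mc{K}_t(n-m)$, which is legitimate because $u_0$ is bounded and $\mc{K}_t$ together with $\partial_t \mc{K}_t = \Delta \mc{K}_t$ are summable with locally uniform bounds in $t > 0$. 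Once these are in place, the argument is exactly a discrete parabolic comparison principle localized by the superharmonicity hypothesis on $I^c$.
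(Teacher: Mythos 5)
Your plan is a genuinely different route from the paper's. The paper fixes $\varepsilon>0$ and works with the first ``exit time'' $T_\varepsilon=\sup\{t\geq 0:\ u(n,t)\leq f(n)+\varepsilon\ \ \forall n\in I^c\}$: it shows $T_\varepsilon>0$ from the uniform bound $|\partial_t u|\leq 2\|u_0\|_\infty$, then for $n\in I^c$ and $t\leq T_\varepsilon$ bounds each neighbor by $u(m,t)\leq f(m)+\varepsilon$ (trivially when $m\in I$, by the definition of $T_\varepsilon$ when $m\in I^c$), so that $\Delta f(n)\leq 0$ gives the differential inequality $\partial_t u(n,t)\leq f(n)+\varepsilon-u(n,t)$; integrating ($y'\leq -y$) produces the uniform strict gap $u(n,T_\varepsilon)\leq f(n)+\varepsilon-\varepsilon e^{-T_\varepsilon}$, which combined with the time-Lipschitz bound pushes the inequality past $T_\varepsilon$, contradicting its definition; finally $\varepsilon\to 0$. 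Your version instead perturbs by $\varepsilon t$ and argues at an (almost) maximum of $g=u-f-\varepsilon t$. Both are comparison arguments exploiting the same superharmonicity hypothesis, but the paper's formulation takes suprema over $I^c$ at each time and never needs to locate a near-maximal point in space, which is precisely the non-compactness difficulty your version must confront.

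That difficulty is the one step that is not right as written: at an arbitrary point with $g(n_0,t_0)>\sup g-\delta$ you do get $\Delta g(n_0,t_0)\leq \sup g-g(n_0,t_0)<\delta$ for free, but you do \emph{not} get $\partial_t g(n_0,t_0)\geq-\delta'$; a point can be within $\delta$ of the supremum in value while $g$ is decreasing fast in time there. You must prove that some near-supremum point with both properties exists. This is fixable: if every point of $\{g>\sup g-\delta\}$ had $\partial_t g<-\delta'$, then following $t\mapsto g(n_1,t)$ backwards from a near-supremum point keeps it inside that set and forces $g(n_1,0)>0$, contradicting $g(\cdot,0)=|u_0|-f\leq 0$. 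Alternatively your box-truncation variant works cleanly and needs no kernel decay at all: for $R$ large the maximum of $g$ over $([-R,R]^d\cap\Z^d)\times[0,T]$ is attained and lies within $\delta$ of the global supremum; if it is positive it occurs at some $t_0>0$ (since $g(\cdot,0)\leq 0$), so $\partial_t g(n_0,t_0)\geq 0$, while $\Delta g(n_0,t_0)\leq (\text{global sup})-(\text{box max})\leq\delta$ even though neighbors may leave the box. Your computation then yields $\Delta f(n_0)\geq\varepsilon-\delta>0$, hence $n_0\in I$ and $g(n_0,t_0)\leq-\varepsilon t_0\leq 0$, a contradiction. With either repair the plan is complete; note also that, exactly like the paper, you are relying on the standing reduction $u_0\geq 0$ so that $u(\cdot,0)=|u_0|\leq f$.
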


\begin{proof}
Suppose without loss of generality $u_0$ is not identically zero. Let $u: \Z^d \times [0,\infty)$ be the heat flow in $\Z^d$ with initial condition $u_0$. It suffices to prove that 
$$\sup_{t \geq 0} u(n,t) \leq f(n),$$
for all $n \in I^c$. It is easy to see that $\|u(\cdot,t)\|_{\infty} \leq \|u_0\|_{\infty}$ for any $t\geq0$. Therefore,
\begin{equation}\label{lem_Sec2.1}
|\partial_t u(n,t)|\leq \|\Delta u\|_{\infty}\leq 2 \|u_0\|_{\infty}. 
\end{equation}
Take $\varepsilon >0$ and define 
$$T_{\varepsilon} = \sup\{t \geq 0;\ u(n,t) \leq f(n) + \varepsilon,\ \forall n \in I^c\}.$$
In view of \eqref{lem_Sec2.1} we have 
\begin{equation*}
T_{\varepsilon} \geq \frac{\varepsilon}{2\|u_0\|_{\infty}}> 0.
\end{equation*}
Suppose $T_{\varepsilon} < \infty$. For any $n \in I^c$ and $0 \leq t \leq T_{\varepsilon}$ we have
\begin{align*}
\partial_t u(n,t)&=\frac{1}{2d}\sum_{\|m-n\|_1=1} \big[u(m,t)- u(n,t)\big]\\
    &\leq\frac{1}{2d}\sum_{\|m-n\|_1=1}\big[f(m)+\varepsilon-u(n,t)\big]\\
    &=f(n)+\varepsilon-u(n,t)+\frac{1}{2d}\sum_{\|m-n\|_1=1}\big[f(m)-f(n)\big]\\
    &\leq f(n)+\varepsilon-u(n,t).
\end{align*}
Define $y(t)=u(n,t)-(f(n)+\varepsilon)$. From the inequality above we have $y'(t)\leq-y(t)$ and thus, for any $t\in [0,T_\varepsilon]$,
$$ y(t)\leq e^{-t}y(0)\leq -\varepsilon e^{-t}\leq-\varepsilon e^{-T_\varepsilon}.$$
Hence
\begin{equation}\label{lem_Sec2.1.2}
u(n,T_\varepsilon)\leq (f(n)+\varepsilon)-\frac{\varepsilon}{\exp(T_\varepsilon)}
\end{equation}
for any $n\in I^c$. Combining \eqref{lem_Sec2.1.2} with \eqref{lem_Sec2.1} we conclude that
$u(n,t)\leq f(n) + \varepsilon$ for any $t$ with
$$T_\varepsilon\leq t\leq T_\varepsilon+\frac{\varepsilon}{2\|u_0\|_\infty
   \exp(T_\varepsilon)},$$
which is in contradiction with the assumption that $T_\varepsilon < \infty$. Hence $T_\varepsilon=\infty$ and 
$$ u(n,t)\leq f(n)+\varepsilon$$
for all $t\geq 0$ and all $n \in I^c$. To conclude the proof, take the limit $\varepsilon\to0^{+}$.
\end{proof}

\subsection{Proof of Theorem \ref{thm_disc}}

\subsubsection*{Step 1: Zorn's lemma} Recall that we are working here in the two cases: (i) $d=1$ and $1\leq p \leq \infty$ or (ii) $d>1$ and $p=2$. Consider the following family of functions:
\begin{equation*}
\mc{S} = 
\left\{
\begin{array}{cc}
f:\Z^d \to \R;\\
\\
u_0(n) \leq f(n) \leq u^*(n);& \ \ \forall n \in \Z^d;\\
\\
\|\nabla f\|_p \leq \|\nabla u_0\|_p\,.&
\end{array}
\right.
\end{equation*}
Note that $\mc{S}$ is non-empty since $u_0 \in \mc{S}$. We want to show ultimately that $u^* \in \mc{S}$. We put a partial order $\preceq$ in $\mc{S}$ by considering the pointwise order (i.e. $f \preceq g$ in $\mc{S}$ if and only if $f(n) \leq g(n)$ for all $n \in \Z^d$). Let us prove that $(\mc{S,\preceq)}$ is inductive, i.e. every totally ordered subset has an upper bound in $\mc{S}$. Let $\{f_{\alpha}\}_{\alpha \in \Lambda}$ be a totally ordered subset and define
$$\overline{f}(n) = \sup_{\alpha \in \Lambda}f_{\alpha}(n).$$
We claim that $\overline{f} \in \mc{S}$. It is clear that $u_0(n) \leq \overline{f}(n) \leq u^*(n)$ for all $n \in \Z^d$. Let $J \subset \Z^d$ be a {\it finite} set and define $\widetilde{J} = \{n \in \Z^d;\ {\rm dist}(n,J) \leq 1\}$ (this distance is taken with respect to the $\|\cdot\|_1$-norm). There exists a sequence $\{f_k\}_{k \in \N} \subset \{f_{\alpha}\}_{\alpha \in \Lambda}$ such that 
$$\lim_{k \to \infty} f_k(n) = \overline{f}(n)$$
for all $n \in \widetilde{J}$. Thus,
\begin{equation*}
\sum_{n \in J} \big|\nabla \overline{f}(n)\big|^p = \lim_{k \to \infty} \sum_{n \in J} |\nabla f_k(n)|^p \leq \limsup_{k \to \infty} \|\nabla f_k\|_p^p \leq \|\nabla u_0\|_p^p.
\end{equation*}
Since this holds for any finite set $J$, we must have 
$$\big\|\nabla \overline{f}\big\|_p \leq \|\nabla u_0\|_p,$$
and thus $\overline{f} \in \mc{S}$. By Zorn's lemma $(\mc{S}, \preceq)$ has (at least) one maximal element, which we call $g$.

\subsubsection*{Step 2: Conclusion} We now claim that $g = u^*$. Suppose this is not true and let $I = \{n \in \Z^d;\ g(n) = u^*(n)\}$. By Lemma \ref{lem3}, we know that $g$ cannot be superharmonic on $I^c$, and thus there is a point $n \in I^c$ such that $\Delta g(n) >0$.

\medskip

We first deal with the case $d \geq 1$ and $p=2$. Consider the function
$$q_n(x) = \sum_{\|m-n\|_1=1} (g(m) - x)^2.$$
It is easy to see that $x \mapsto q_n(x)$ is a strictly convex function with its unique minimizer $x = x_n$ given by 
$$x_n = \frac{1}{2d} \sum_{\|m-n\|_1=1} g(m) = g(n) + \Delta g (n).$$
Therefore, if $\Delta g(n) >0$, we can consider the function
\begin{equation*}
\widetilde{g}(m) = \left\{
\begin{array}{lr}
g(m),& \ \ {\rm if} \ \ m \neq n;\\
\\
\min\big\{u^*(n), \,g(n) + \Delta g(n)\big\},& \ \ {\rm if}\ m =n.
\end{array}
\right.
\end{equation*}
Then $g \leq \widetilde{g} \leq u^*$ pointwise and, since $g(n) < \widetilde{g}(n) \leq x_n$, the strict convexity of $q_n$ gives us
$$q_n(g(n)) > q_n\big(\widetilde{g}(n)\big) \geq q_n(x_n).$$
This plainly implies that
$$\|\nabla g\|_2^2 = \big\|\nabla \widetilde{g}\big\|_2^2 + q_n(g(n)) - q_n\big(\widetilde{g}(n)\big) > \big\|\nabla \widetilde{g}\big\|_2^2.$$
Thus $\widetilde{g} \in \mc{S}$ and this contradicts the maximality of $g$.

\medskip

Now we deal with the case $d=1$ and $1\leq p \leq \infty$. The idea is the same as above. If $1\leq p < \infty$ we simply observe that the function (for fixed $a,b \in \R$)
$$q(x) = |a-x|^p + |b-x|^p$$
is convex (strictly convex if $1<p$) with minimizer $x = (a+b)/2$. If $p=\infty$ we note that 
$$q(x) = \max\{|a-x|, |b-x|\}$$
is also convex with minimizer $x = (a+b)/2$. This concludes the proof.

%%%%%%%%%%%%%%%%%%%%%%%%%%%%%%%%%%%%%%%%%%%%%%%%%%%%%%%%%%%%%%%%%%%%%%%%%%%%%%%%%%%%%%%%%%%%%%%%%%%%%%%%%%%%%%%%%%%%%%%%%%%%%%%%%

\section{Proof of Theorem \ref{thm_disc_Poisson} - Discrete Poisson kernel}
In this section we keep, without loss of generality, the assumption $u_0 \geq0$ (since $\|\nabla |u_0|\|_p \leq \|\nabla u_0\|_p$ for any $1\leq p \leq \infty$).

\subsection{Preliminaries} We start by proving the analogous statement to Lemma \ref{lem3} for the discrete Poisson maximal function.

\begin{lemma}\label{lem7_Poisson_b}
Let $u_0 \in l^{\infty}(\Z^d)$ and $u^{\star}$ be its discrete Poisson maximal function. Let $f \in l^{\infty}(\Z^d)$ be such that $f \geq u_0$. Let $I = \{n \in \Z^d;\, u^{\star}(n)\leq f(n)\}$ and suppose that $\Delta f(n) \leq 0$ for all $n \in I^c$. Then $I = \Z^d$.
\end{lemma}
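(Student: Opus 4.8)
The plan is to mimic the argument for Lemma \ref{lem3}, replacing the parabolic comparison (Gronwall in the time variable) by an elliptic maximum-principle comparison in the discrete half-space $\Z^d\times\Z^+$. As before, we may assume $u_0\not\equiv 0$. It suffices to show that $\sup_{y\in\Z^+}u(n,y)\le f(n)$ for every $n\in I^c$, where $u(n,y)=(|u_0|*\mc{P}_y)(n)$ is the discrete harmonic extension of $u_0$. First I would fix $\varepsilon>0$ and extend $f$ to a function on $\Z^d\times\Z^+$ by setting $F(n,y)=f(n)$ for all $y$; the point is that $F$ is independent of $y$, so the $(d+1)$-dimensional discrete Laplacian of $F$ at $(n,y)$ with $y\ge 1$ is exactly $\tfrac{d}{d+1}\,\Delta f(n)$, which is $\le 0$ on $I^c\times\N$ by hypothesis. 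Thus on the ``bad'' set $F$ is superharmonic, while $u$ is harmonic; the difference $w=u-F-\varepsilon$ is subharmonic there.

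\textbf{Key steps.} The subtlety is that on $I\times\Z^+$ we do not control the sign of $\Delta F$, so the comparison cannot be run on all of $\Z^d\times\Z^+$ at once; we only know $w\le 0$ on $I\times\{0\}$ (since there $u(n,0)=u_0(n)\le f(n)$) and more importantly on all of $\Z^d\times\{0\}$. Here is where I would use the definition of $I$: on $I^c\times\{0\}$ we still have $u(n,0)=u_0(n)\le f(n)$, so in fact $w\le-\varepsilon<0$ on the entire bottom boundary $\Z^d\times\{0\}$. To get a clean maximum principle I would restrict to a finite cylinder $C_R=\{(n,y):|n|\le R,\ 0\le y\le R\}$ and argue: $u$ is bounded (by $\|u_0\|_\infty$, since $\|\mc{P}_y\|_{l^1}=1$) and, by the Riemann--Lebesgue lemma applied to \eqref{DP_eq4} together with the fact that $\widehat{\mc{P}_y}(\xi)\to 0$ for $\xi\ne 0$ as $y\to\infty$ (so $\mc{P}_y(n)\to 0$ and, by dominated convergence, $u(n,y)\to 0$), the function $u$ is small near the lateral and top parts of $\partial C_R$ for $R$ large. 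More precisely, given $\varepsilon$ there is $R_0$ so that $u(n,y)<\varepsilon\le f(n)+\varepsilon$ on the lateral boundary $|n|=R_0$ for $0\le y\le R_0$, and a further enlargement handles the top face $y=R_0$ (using $u(n,y)\to 0$ uniformly on $|n|\le R_0$ as $y\to\infty$, since there are finitely many such $n$). On the bottom face $u(n,0)\le f(n)$. Now suppose, for contradiction, that $u(n_0,y_0)>f(n_0)$ for some $(n_0,y_0)\in I^c\times\Z^+$; choosing $R_0$ large enough that $(n_0,y_0)\in C_{R_0}$ and that the boundary estimates above hold with this $\varepsilon$, the subharmonic function $u-F$ would attain a positive interior maximum over $C_{R_0}$ at an interior point, say $(n_1,y_1)$. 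If $(n_1,y_1)\in I^c\times\N$ this contradicts the discrete maximum principle since $\Delta(u-F)(n_1,y_1)\ge 0$ forces $u-F$ to equal its value at all neighbours, propagating the maximum to the boundary where $u-F\le\varepsilon$; I would make this propagation rigorous by the standard connectedness argument (the set where $u-F$ equals its maximum is open in the graph and meets the boundary).

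\textbf{Main obstacle.} The point requiring care is exactly the set $I\times\Z^+$, where $\Delta F=\tfrac{d}{d+1}\Delta f$ need not be $\le 0$, so $u-F$ need not be subharmonic there and the connectedness/propagation argument could ``escape'' through $I$. The resolution, as in the proof of Lemma \ref{lem3}, is that on $I$ we have $u^\star(n)\le f(n)$, hence $u(n,y)\le u^\star(n)\le f(n)$ for all $y$, i.e. $u-F\le 0$ on $I\times\Z^+$ outright. So a positive interior maximum of $u-F$ over $C_{R_0}$ can only occur at a point of $I^c\times\N$, where subharmonicity is available, and the propagation must then stay inside $I^c$ until it reaches a point of $I$ (where $u-F\le 0$, contradicting positivity of the maximum) or the boundary (where $u-F\le\varepsilon$, again a contradiction once $\varepsilon$ is small relative to the assumed gap $u(n_0,y_0)-f(n_0)$). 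Letting $\varepsilon\to 0^+$ then gives $u(n,y)\le f(n)$ for all $(n,y)$ with $n\in I^c$, hence $u^\star(n)\le f(n)$ on $I^c$, so $I^c=\emptyset$ and $I=\Z^d$. The only genuinely new ingredient compared with the heat-kernel lemma is the decay of the Poisson extension at spatial and ``time'' infinity needed to set up the finite-cylinder maximum principle; everything else is a transcription of the earlier argument with Gronwall replaced by harmonicity.
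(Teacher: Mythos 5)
Your reduction to a comparison in the half-space $\Z^d\times\Z^+$ is the right instinct, and the computation $\Delta_{d+1}F=\tfrac{d}{d+1}\Delta f\le 0$ on $I^c\times\N$ together with the observation that $u-F\le 0$ on $I\times\Z^+$ (and on the bottom face) is fine. The genuine gap is in the boundary control you need to run the finite-cylinder maximum principle: you claim that $u(n,y)\to 0$ as $|n|\to\infty$ (lateral faces) and as $y\to\infty$ (top face). Both claims are false under the sole hypothesis $u_0\in l^\infty(\Z^d)$: for $u_0\equiv 1$ one has $u(n,y)\equiv 1$ for all $(n,y)$, since $\sum_m\mc{P}_y(m)=1$. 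The Riemann--Lebesgue and dominated-convergence arguments in the paper's construction apply to the \emph{kernel} $\mc{P}_y$ (whose Fourier transform lives on the finite-measure torus), not to the convolution $u=|u_0|*\mc{P}_y$ with a bounded, non-decaying $u_0$; in your step ``$\mc{P}_y(n)\to 0$ and, by dominated convergence, $u(n,y)\to 0$'' there is no summable majorant uniform in $y$ (the unit mass of $\mc{P}_y$ merely spreads out), and indeed the conclusion fails. Since $I^c$ may be unbounded, the lateral boundary $\{|n|=R\}$ can meet $I^c$ at every radius, and on those points you have no smallness of $u-F$ whatsoever; the same happens on the top face. So the contradiction at the end cannot be reached as written, and without some Phragm\'en--Lindel\"of-type substitute the cylinder argument does not close.

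The paper circumvents exactly this lack of decay by collapsing the spatial variable: it sets $\beta(y)=\sup_{n\in\Z^d}\,(u(n,y)-f(n))_+$, notes that $\beta$ is bounded with $\beta(0)=0$ and that (if $I^c\neq\emptyset$) the supremum may be taken over $I^c$, and then uses the harmonicity identity $2(d+1)u(n,y)=u(n,y-1)+u(n,y+1)+\sum_{\|m-n\|_1=1}u(m,y)$ together with $u(\cdot,y)\le f+\beta(y)$ and $\Delta f\le 0$ on $I^c$ to get $\beta(y)\le\tfrac12\big(\beta(y-1)+\beta(y+1)\big)$ for $y>0$; a bounded midpoint-convex sequence starting at $0$ must vanish identically, which gives $u^\star\le f$ everywhere. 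If you want to salvage your route, you would need either this kind of one-variable convexity/Phragm\'en--Lindel\"of argument in $y$, or an a priori reduction to data with decay, neither of which is supplied in your proposal; as it stands the boundary estimates are the missing (and unprovable) ingredient.
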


\begin{proof}
For $y\in\Z^+$ we define
$$ \beta (y)=\sup_{n\in\Z^d} (u(n,y)-f(n))_+ \,,$$
where $t_+ := \max\{t,0\}$. Observe that $\beta(0)=0$ and that, for any $y$, we have
$$0\leq\beta(y)\leq\|u^{\star}\|_\infty+\|f\|_\infty <\infty$$ 
and
\begin{equation}\label{pf_lem6_new_eq1}
u(\cdot,y)\leq f+\beta(y).
\end{equation}
 
\medskip

Suppose that $I^c\neq\emptyset$. In this case,
$$ \beta(y)=\sup_{n\in I^c} \;( u(n,y)-f(n))_+.$$
Take $n\in I^c$ and $y>0$. Using the fact that $u$ is harmonic in the discrete upper half-space, together with \eqref{pf_lem6_new_eq1} and the hypothesis that $\Delta f(n) \leq 0$ for $n \in I^c$, we find
\begin{align*}
    2(d+1)u(n,y)&=u(n,y-1)+u(n,y+1)+\sum_{||m-n||_1=1}u(m,y)\\
    &\leq 2 f(n)+\beta(y-1) + \beta(y+1)+\sum_{||m-n||_1=1}\big(f(m)+\beta(y)\big)\\
   &\leq 2(d+1)f(n)+\beta(y-1)+\beta(y+1)+2d \,\beta(y).
  \end{align*}
Therefore
\begin{equation}\label{pf_lem6_new_eq2}
2(d+1)\big(u(n,y)-f(n)\big)\leq\beta(y-1)+\beta(y+1)+2d\, \beta(y). 
\end{equation}
Taking the supremum over $n\in I^c$ on the left hand side of \eqref{pf_lem6_new_eq2} we find
 $$ \beta(y)\leq \dfrac{\beta(y-1)+\beta(y+1)}{2}$$
 for any $y >0$. Since the sequence $y \mapsto \beta(y)$ is bounded and $\beta(0)=0$, we must have $\beta(y)=0$ for all $y>0$. Therefore $I^c$ would be empty, in contradiction with the original assumption $I^c\neq\emptyset$. 
\end{proof}

\subsection{Proof of Theorem \ref{thm_disc_Poisson}} 
Once we have established Lemma \ref{lem7_Poisson_b}, the proof of Theorem \ref{thm_disc_Poisson} plainly follows by the argument based on Zorn's lemma used in the proof of Theorem \ref{thm_disc} for the discrete heat flow maximal function. We will omit the details.

\section{Proof of Theorem \ref{thm_cont} - Continuous heat kernel}

\subsection{Preliminaries} We begin this section with a selection of lemmas that will be helpful as we move on to the proof. Throughout this section we assume without loss of generality that $u_0 \geq0$, for if $u_0 \in W^{1,p}(\R^d)$ we have $|u_0| \in W^{1,p}(\R^d)$ and $|\nabla |u_0|| = |\nabla u_0|$ a.e. if $u_0$ is real-valued (in the general case $u_0$ complex-valued we have $|\nabla |u_0|| \leq |\nabla u_0|$ a.e), and if $u_0$ is of bounded variation on $\R$ we have $V(|u_0|) \leq V(u_0)$. In what follows we write
$${\rm Lip}(u) = \sup_{\stackrel{x,y \in \R^d}{x\neq y}} \frac{|u(x) - u(y)|}{|x-y|}$$
for the Lipschitz constant of a function $u:\R^d \to \R$.

\begin{lemma}\label{lem4}.
\begin{itemize}
\item[(i)] If $u_0 \in C(\R^d) \cap L^p(\R^d)$, for some $1\leq p < \infty$, then $u^* \in C(\R^d)$. 
\smallskip
\item[(ii)] If $u_0$ is bounded and Lipschitz continuous then $u^*$ is bounded and Lipschitz continuous with ${\rm Lip}(u^*) \leq {\rm Lip}(u_0)$.
\end{itemize}
\end{lemma}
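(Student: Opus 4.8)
The plan is to prove each part of Lemma \ref{lem4} by combining continuity properties of the individual convolutions $u(\cdot,t) = u_0 * K_t$ with the fact that a supremum of continuous (resp. Lipschitz) functions with a uniform modulus of continuity inherits that regularity.

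\textbf{Part (i).} First I would show that for each fixed $t>0$ the function $x \mapsto (u_0 * K_t)(x)$ is continuous: since $u_0 \in L^p$ and $K_t \in L^{p'}$ (the Gaussian lies in every $L^q$), the convolution is continuous by the standard continuity-of-translation argument. This alone does not give continuity of the sup. The key observation is that the sup can be taken over a \emph{compact} range of $t$ locally uniformly, up to an error that vanishes. Concretely: for small $t$, $(u_0*K_t)(x) \to u_0(x)$ uniformly on compact sets (here one uses that $u_0$ is continuous, and $L^p$ for control of the tail of the convolution), so the contribution of $t$ near $0$ is essentially $u_0(x)$ itself; for large $t$, $\|u_0 * K_t\|_\infty \leq \|u_0\|_p \|K_t\|_{p'} \to 0$ as $t\to\infty$, so large $t$ contributes nothing once we are at a point where $u^*(x) > 0$ (and if $u^*(x)=0$ everywhere near $x$ there is nothing to prove). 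Hence on a neighborhood of any fixed $x_0$, $u^*$ equals the supremum of $\max\{u_0(x), \sup_{t \in [a,b]} (u_0*K_t)(x)\}$ for suitable $0<a<b<\infty$. Since $(x,t)\mapsto (u_0*K_t)(x)$ is jointly continuous on $\R^d \times [a,b]$ (joint continuity follows from continuity in $x$ uniformly in $t\in[a,b]$, which in turn follows from equicontinuity of $\{K_t\}_{t\in[a,b]}$ in $L^{p'}$), the sup over the compact set $[a,b]$ is continuous in $x$, and the max with the continuous function $u_0$ is continuous.

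\textbf{Part (ii).} This part is cleaner. If $u_0$ is bounded, then $|u(x,t)| \leq \|u_0\|_\infty \|K_t\|_1 = \|u_0\|_\infty$, so $u^*$ is bounded. For the Lipschitz bound, fix $t>0$ and $x,y \in \R^d$; by the change of variables $z \mapsto z - (x-y)$ inside the convolution integral,
\[
(u_0 * K_t)(x) - (u_0 * K_t)(y) = \int_{\R^d} \big(u_0(x-z) - u_0(y-z)\big) K_t(z)\,\dz,
\]
whence $|(u_0*K_t)(x) - (u_0*K_t)(y)| \leq {\rm Lip}(u_0)\,|x-y| \int K_t = {\rm Lip}(u_0)\,|x-y|$. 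Thus each $u(\cdot,t)$ is Lipschitz with constant at most ${\rm Lip}(u_0)$, uniformly in $t$. Taking the supremum over $t>0$ and using the elementary fact that $|\sup_t a_t - \sup_t b_t| \leq \sup_t |a_t - b_t|$ gives ${\rm Lip}(u^*) \leq {\rm Lip}(u_0)$.

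The only genuinely delicate point is the joint continuity / local-uniformity step in part (i) — making precise that the supremum defining $u^*$ is, locally, a supremum over a compact parameter range together with the value $u_0(x)$, and that this family is equicontinuous. Everything else is routine. I would handle the equicontinuity of $\{K_t\}_{t \in [a,b]}$ by noting that $t \mapsto K_t$ is continuous from $[a,b]$ into $L^{p'}(\R^d)$ (dominated convergence plus the explicit Gaussian form), so $\{K_t\}$ is a compact, hence uniformly equicontinuous-under-translation, subset of $L^{p'}$; this immediately yields that $(x,t)\mapsto (u_0*K_t)(x)$ is continuous on $\R^d\times[a,b]$.
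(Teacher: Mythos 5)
Your proof is correct, and part (ii) coincides with the paper's argument (each $u_0*K_t$ is bounded by $\|u_0\|_\infty$ and Lipschitz with constant ${\rm Lip}(u_0)$, and a pointwise supremum of uniformly Lipschitz functions inherits the constant). For part (i), however, you take a genuinely different route. The paper exploits sublinearity of the maximal operator: it bounds $|\tau_h u^*(x)-u^*(x)|$ by $(\tau_h u_0-u_0)^*(x)$ and then shows this is small by splitting the supremum at a time $t_\varepsilon$ (for $t>t_\varepsilon$, Jensen plus $\|K_t\|_\infty=(4\pi t)^{-d/2}$ gives smallness uniformly in $x,h$; for $t\le t_\varepsilon$ it splits the convolution integral at radius $\sqrt{t_\varepsilon}$, using local uniform continuity of $u_0$ near $x$ and H\"older with $\|\chi_{\{|y|\ge\sqrt{t_\varepsilon}\}}K_t\|_{p'}$ bounded). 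You instead localize the supremum in $t$: the contribution of $t$ near $0$ is within $\varepsilon$ of $u_0(x)$ locally uniformly (approximate identity plus $L^p$ tail control), the contribution of $t>b$ is $\le\|u_0\|_p\|K_t\|_{p'}\le\varepsilon$, so $u^*$ is, uniformly on a neighborhood, within $\varepsilon$ of the continuous function $\max\{u_0,\sup_{t\in[a,b]}u_0*K_t\}$, continuity of the latter coming from joint continuity on the compact parameter range. Both arguments are sound; the paper's is shorter and avoids the joint-continuity machinery, while yours is arguably more structural and would apply verbatim to any approximation of the identity whose kernels tend to $0$ in $L^{p'}$ as $t\to\infty$. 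Two small points to tighten: your phrase ``$u^*$ equals the supremum of $\max\{u_0(x),\sup_{t\in[a,b]}\}$'' should be stated as an approximation within $\varepsilon$ (as your own ``up to an error that vanishes'' indicates), since neither the small-$t$ nor the large-$t$ contribution is exactly reproduced; and when $p=1$, so $p'=\infty$, the general principle ``compact subset of $L^{p'}$ implies uniform continuity under translation'' fails in $L^\infty$ — you should instead use that the Gaussians $K_t$, $t\in[a,b]$, have uniformly bounded gradients (or lie in a compact subset of $C_0$), which gives the needed equicontinuity directly.
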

\begin{proof} 
(i) Recall that 
$$u^*(x) = \sup_{t >0} (u_0*K_t)(x)$$
with the heat kernel $K_t$ defined in \eqref{heat}. Let us denote here $\tau_hu_0(x):= u_0(x- h)$. Given $\varepsilon >0$, there is a time $t_{\varepsilon} < \infty$ such that 
\begin{align*}
|\tau_h u_0 - u_0|*K_t(x)& \leq \left(|\tau_h u_0 - u_0|^p*K_t(x)\right)^{1/p}\\
& \leq \left(\|\tau_h u_0 - u_0\|_p^p \ \|K_t\|_{\infty}\right)^{1/p} = \frac{\|\tau_h u_0 - u_0\|_p}{(4\pi t)^{d/2p}}\\
& \leq \frac{2\|u_0\|_p}{(4\pi t)^{d/2p}} < \varepsilon
\end{align*}
whenever $t > t_{\varepsilon}$, for all $x, h \in \R^d$. Note that we used Jensen's inequality in the first line above and Young's inequality on the second line. On the other hand, given $x \in \R^d$ and if $0 < t \leq t_{\varepsilon}$, we can choose $\delta >0$ such that
\begin{align*}
& |\tau_h u_0 - u_0|*K_t(x) \\
&= \int_{|y| < \sqrt{ t_{\varepsilon}}} |\tau_h u_0 - u_0|(x-y)\,K_t(y)\,\dy + \int_{|y| \geq \sqrt{t_{\varepsilon}}} |\tau_h u_0 - u_0|(x-y)\,K_t(y)\,\dy\\
& \leq \sup_{w \in B_{\sqrt{t_{\varepsilon}}}(x)}|\tau_h u_0 - u_0|(w) + \|\tau_h u_0 - u_0\|_p\,\|\chi_{\{|y|\geq \sqrt{t_{\varepsilon}}\}} \,K_t\|_{p'}< \varepsilon
\end{align*}
whenever $|h| < \delta$, where we have used the fact that $\|\chi_{\{|y|\geq \sqrt{t_{\varepsilon}}\}}\, K_t\|_{p'}$ is bounded for $0 < t \leq t_{\varepsilon}$. Using the sublinearity, we then arrive at 
$$\big|\tau_h u^*(x) - u^*(x)\big| \leq (\tau_h u_0 - u_0)^*(x) \leq \varepsilon$$
for $|h| < \delta$, which proves that $u^*$ is continuous at $x$.

\medskip

\noindent (ii) It is easy to check that if $u_0$ is bounded by $M$ and has Lipschitz constant $L$, then for each time $t>0$ the function $u_0 * K_t$ is also bounded by $M$ and admits the same Lipschitz constant $L$. In this case, the pointwise supremum of uniformly Lipschitz functions is still Lipschitz with (at most) the same constant.
\end{proof}

We will say here that a continuous function $f$ is {\it subharmonic} in an open set $A$ if, for every $x \in A$, and every ball $\overline{B_r(x)} \subset A$ we have
$$f(x) \leq \frac{1}{\sigma_{d-1}}\int_{S^{d-1}} f(x +r\xi)\,{\rm d}\sigma(\xi),$$
where $\sigma_{d-1}$ denotes the surface area of the unit sphere $S^{d-1}$, and ${\rm d}\sigma$ is its surface measure. Here $B_r(x)$ denotes the open ball of radius $r$ and center $x$, and $\overline{B_r(x)}$ denotes the corresponding closed ball.

\begin{lemma}[Subharmonicity]\label{lem5} Let $u_0 \in C(\R^d) \cap L^p(\R^d)$ for some $1\leq p < \infty$ or $u_0$ be bounded and Lipschitz continuous. Then $u^*$ is subharmonic in the open set $A = \{x \in \R^d; \,u^*(x) > u_0(x)\}$.
\end{lemma}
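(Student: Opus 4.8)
\textbf{Proof strategy for Lemma \ref{lem5}.}

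The plan is to fix a point $x_0$ in the open set $A = \{x : u^*(x) > u_0(x)\}$ and a closed ball $\overline{B_r(x_0)} \subset A$, and to show the sub-mean-value inequality at $x_0$ holds for that radius $r$. The main idea, in the spirit of Lemmas \ref{lem3} and \ref{lem7_Poisson_b}, is that $u^*$ cannot strictly violate the sub-mean-value property at $x_0$ because if it did, the heat flow $u(\cdot,t)$ would eventually ``push past'' $u^*(x_0)$, contradicting the definition of $u^*$ as a supremum. Concretely, I would argue by contradiction: suppose
$$u^*(x_0) > \frac{1}{\sigma_{d-1}} \int_{S^{d-1}} u^*(x_0 + r\xi)\,{\rm d}\sigma(\xi) + 2\delta$$
for some $\delta > 0$.

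First I would set up the comparison. Since $\overline{B_r(x_0)} \subset A$, on this compact set we have $u^* > u_0$, and by Lemma \ref{lem4} $u^*$ is continuous, so there is a gap: $u^*(z) \geq u_0(z) + \eta$ for all $z \in \overline{B_r(x_0)}$ for some $\eta > 0$. This means that for $z$ near $\overline{B_r(x_0)}$, the supremum defining $u^*(z)$ is attained (or nearly attained) only at times bounded away from $0$; combined with the decay $u_0 * K_t \to 0$ as $t \to \infty$ in the $L^p$ case (or a suitable bound in the Lipschitz case), the supremum is essentially attained on a compact time interval $[t_1, t_2] \subset (0,\infty)$, uniformly for $z$ in a neighborhood of $\overline{B_r(x_0)}$. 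I would then consider the spherical average $A(t) := \frac{1}{\sigma_{d-1}} \int_{S^{d-1}} u(x_0 + r\xi, t)\,{\rm d}\sigma(\xi)$. By continuity and the uniform-attainment fact, for $t$ ranging over $[t_1,t_2]$ we have $A(t) \leq \sup_{t>0} A(t) \le \frac{1}{\sigma_{d-1}}\int_{S^{d-1}} u^*(x_0+r\xi)\,{\rm d}\sigma(\xi) < u^*(x_0) - 2\delta$.

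Next comes the dynamical argument. Pick a time $t^* > 0$ with $u(x_0, t^*) > u^*(x_0) - \delta$, and by the uniform-attainment reduction we may take $t^* \in [t_1, t_2]$. Now I run a maximum-principle / Grönwall comparison exactly as in Lemma \ref{lem3}: on the cylinder $\overline{B_r(x_0)} \times [t^*, \infty)$, compare $u$ with the harmonic function $h$ on $B_r(x_0)$ having boundary data $u^*$ on $\partial B_r(x_0)$ (plus the constant $2\delta$ of slack), i.e. show that since $\partial_t u = \Delta u$ and $u(\cdot, t) \le u^* \le h + 2\delta$ on $\partial B_r(x_0)$ for all $t \ge t^*$, while $\Delta(h) = 0$, the quantity $w(x,t) = u(x,t) - h(x) - 2\delta$ satisfies the heat equation, is $\le 0$ on the parabolic boundary except possibly the bottom, and one derives (via a $\sup$-over-the-ball ODE inequality as in Lemma \ref{lem3}, using $h(x_0) = A(t)$-type mean value at the center $<u^*(x_0)-2\delta$) that $u(x_0,t)$ must decrease below $u^*(x_0) - \delta$ within a controlled time — but this contradicts $u(x_0,t^*) > u^*(x_0) - \delta$ together with $u(x_0, t) \le u^*(x_0)$ for all $t$ and the fact that $t^*$ was chosen with $u(x_0,t^*)$ close to the supremum. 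Letting $\delta \to 0^+$ finishes the proof.

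\textbf{Main obstacle.} The delicate point is the uniform-in-$z$ reduction of the supremum to a compact time interval: one needs that near $\overline{B_r(x_0)}$ the ``large $t$'' and ``small $t$'' ranges contribute strictly less than $u^*$, which requires the $L^p$ decay estimate from the proof of Lemma \ref{lem4}(i) for large $t$ and the strict gap $u^* \ge u_0 + \eta$ for small $t$ (since $u(z,t) \to u_0(z)$ as $t \to 0$). Handling the two hypotheses ($u_0 \in C \cap L^p$ versus $u_0$ bounded Lipschitz) may require slightly different large-$t$ arguments, but in both cases the heat semigroup's smoothing and the strict inequality defining $A$ give what is needed. Setting up the cylinder/maximum-principle comparison so that the Grönwall-type decay of $u(x_0,\cdot)$ genuinely contradicts the near-supremal value at $t^*$ — rather than merely at $t=0$ — is the part requiring the most care, and is exactly where the compact-time-interval reduction is used.
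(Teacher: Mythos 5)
There is a genuine gap in the dynamical step, and it is precisely at the point you flag as delicate. Your plan is to start the parabolic comparison on the cylinder $\overline{B_r(x_0)}\times[t^*,\infty)$, where $t^*$ is a near-maximizing time for $u(x_0,\cdot)$, and to conclude from a Gr\"onwall-type decay that $u(x_0,t)$ must drop below $u^*(x_0)-\delta$ after a controlled time. But that conclusion contradicts nothing: nothing prevents $u(x_0,\cdot)$ from peaking near $t^*$ and then decaying, and the ``compact time interval'' reduction is perfectly consistent with this picture, since it only says the supremum is nearly attained somewhere in $[t_1,t_2]$, not that $u(x_0,\cdot)$ stays large afterwards. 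To contradict $u(x_0,t^*)>u^*(x_0)-\delta$ you would need an upper bound on $u(x_0,t^*)$ itself, i.e.\ you would have to run the comparison \emph{up to} time $t^*$ from some earlier time, and then you face the real obstruction unchanged: at the bottom of the cylinder you only know $u(\cdot,t_{\mathrm{start}})\le u^*$, and how $u^*$ compares with $h$ \emph{inside} the ball is exactly what is in question. Moving the starting time to $t^*$ does not remove this; it only relocates it. (A secondary issue: in the bounded Lipschitz case, relevant for $p=\infty$, the supremum may only be approached as $t\to\infty$, so the uniform compact-time reduction needs extra care --- though in the correct argument it is not needed at all.)

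The paper's proof uses the same ingredients (harmonic replacement $h$ on $B_r(x_0)$ with boundary data $u^*$, plus the parabolic maximum principle), but resolves the bottom-boundary problem differently and without any contradiction hypothesis on the mean value inequality: it runs the comparison on $B_r(x_0)\times(0,T)$ starting at $t=0$, where the bottom data is $u_0$, not $u(\cdot,t^*)$. On the lateral boundary $v=u-h\le u-u^*\le 0$; if the maximum of $v(\cdot,0)=u_0-h$ over $\overline{B_r(x_0)}$ were positive at some $y_0$, the parabolic maximum principle would give $v(y_0,t)\le v(y_0,0)$, i.e.\ $u(y_0,t)\le u_0(y_0)$ for all $t\le T$, and since $T$ is arbitrary this forces $u^*(y_0)=u_0(y_0)$, contradicting $y_0\in\overline{B_r(x_0)}\subset A$. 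Hence $v\le 0$ on the whole cylinder, so $u^*(x_0)\le h(x_0)$, which is the sub-mean-value inequality. Note that the definition of $A$ enters only through this qualitative step (no quantitative gap $\eta$, no spherical average $A(t)$, no Gr\"onwall estimate is needed); if you want to keep your framework, the fix is to replace the forward-from-$t^*$ decay argument by this start-at-$t=0$ comparison together with the $y_0$-trick.
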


\begin{proof}
Note that by Lemma \ref{lem4} we have $u^*$ continuous and thus the set $A = \{x \in \R^d; \,u^*(x) > u_0(x)\}$ is in fact open. Take $x_0 \in A$ and a radius $r>0$ such that $\overline{B_r(x_0)} \subset A$. Let $h:\overline{B_r(x_0)} \to \R$ be the solution of the Dirichlet boundary value problem
\begin{equation*}
\left\{
\begin{array}{rl}
\Delta h = 0& \ {\rm in} \ B_r(x_0);\\
h = u^*& \ {\rm in} \ \partial B_r(x_0).
\end{array}
\right.
\end{equation*}
Since $u^*$ is a continuous function, this problem does admit a unique solution $h \in C^{2}(B_r(x_0)) \cap C\big(\overline{B_r(x_0)}\big)$. Now let $T>0$ and $\Omega = B_r(x_0) \times(0,T)$. Observe that $v(x,t) := u(x,t) - h(x) \in C^2(\Omega)\cap C\big(\overline{\Omega}\big)$ and solves the heat equation in $\Omega$. By the maximum principle for the heat equation, the maximum of $v$ in $\overline{\Omega}$ must be attained either in $\partial B_r(x_0) \times [0,T]$ or in $\overline{B_r(x_0)} \times \{t=0\}$. By construction, note that 
$$\max_{\partial B_r(x_0) \times [0,T]} v(x,t) =  \max_{\partial B_r(x_0) \times [0,T]} u(x,t) - u^*(x) \leq 0.$$
Let $y_0$ be such that 
$$\max_{\overline{B_r(x_0)}} v(x,0) = v(y_0,0).$$
We claim that $v(y_0,0) \leq 0$. In fact, let us suppose that $v(y_0,0) > 0$. Then, by the maximum principle, $v(y_0,t) \leq v(y_0,0)$ for any $0\leq t \leq T$, which in turn implies that $u(y_0,t) \leq u_0(y_0)$ for any $0\leq t \leq T$. Since $T$ is arbitrary, we would have $u^*(y_0) = u_0(y_0)$ and thus $y_0 \notin A$, contradiction. Therefore
$$\max_{\overline{B_r(x_0)}} v(x,0)\leq 0,$$
which plainly gives
$$u(x_0, t) \leq h(x_0)$$
for any $0\leq t \leq T$. As $T$ is arbitrary we conclude that 
$$u^*(x_0) \leq h(x_0),$$
which is the desired result since $h$ is harmonic and thus equal to its average over the sphere $\partial B_r(x_0)$, where $h = u^*$ by construction.
\end{proof}

The next lemma will be important in the proof of the theorem for $p=2$ and $d\geq 1$.

\begin{lemma}\label{lem6}
Let $f, g \in C(\R^d) \cap W^{1,2}(\R^d)$ with $g$ Lipschitz. Suppose that $g \geq 0$ and that $f$ is subharmonic in the open set $J = \{x\in \R^d; \ g(x) >0\}$. Then
\begin{equation*}
\int_{\R^d} \nabla f\,.\,\nabla g\,\,\dx\leq 0.
\end{equation*}
\end{lemma}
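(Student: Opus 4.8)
The plan is to prove $\int_{\R^d} \nabla f \cdot \nabla g\,\dx \le 0$ by an integration-by-parts argument that turns the integral into a pairing of the (distributional) Laplacian of $f$ against $g$, and then using that $f$ is subharmonic precisely on the set where $g > 0$, while $g$ vanishes elsewhere. The heuristic is: $\int \nabla f \cdot \nabla g = -\int g \,\Delta f$; since $\Delta f \ge 0$ (as a measure) on $J = \{g > 0\}$ and $g = 0$ on $J^c$, the product $g\,\Delta f$ is a nonnegative measure, giving the claim. The technical work is making this rigorous since $f$ need not be $C^2$; subharmonicity only gives that $\Delta f$ is a nonnegative Radon measure on $J$, and one must be careful about behavior near $\partial J$ and at infinity (since we only have $f, g \in W^{1,2}$).

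The cleanest route I would take avoids measures entirely by mollification. First, fix a standard mollifier $\rho_\delta$ and note that $f_\delta := f * \rho_\delta$ is smooth and subharmonic on $J_\delta := \{x : \operatorname{dist}(x, J^c) > \delta\} \subset J$ (convolution preserves the sub-mean-value property on the shrunken set). So $\Delta f_\delta \ge 0$ on $J_\delta$. Now I want to write $\int_{\R^d} \nabla f_\delta \cdot \nabla g\,\dx = -\int_{\R^d} g\,\Delta f_\delta\,\dx$; this integration by parts is legitimate because $g$ is Lipschitz with $\nabla g \in L^2$, $f_\delta$ is smooth, and both $f, g$ lie in $W^{1,2}(\R^d)$ so boundary terms at infinity can be controlled by a cutoff $\eta_R$ and a limiting argument (here the Lipschitz bound on $g$ together with $g \in W^{1,2}$, hence $g \in L^2$, is what makes the cutoff error vanish). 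Then I split $-\int g\,\Delta f_\delta = -\int_{J_\delta} g\,\Delta f_\delta - \int_{J_\delta^c} g\,\Delta f_\delta$. On $J_\delta$, $\Delta f_\delta \ge 0$ and $g \ge 0$, so that term is $\le 0$. On $J_\delta^c \supset J^c$: here $g = 0$ on $J^c$, so the only contribution is from the thin shell $J \setminus J_\delta$, which shrinks to $\partial J$ as $\delta \to 0$; since $g$ is continuous and vanishes on $\partial J$ (because $g = 0$ on $J^c$ and $g$ is continuous), $g$ is uniformly small on this shell, and one estimates $\big|\int_{J \setminus J_\delta} g\,\Delta f_\delta\big|$ accordingly. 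Finally, let $\delta \to 0$: $\nabla f_\delta \to \nabla f$ in $L^2$, so $\int \nabla f_\delta \cdot \nabla g \to \int \nabla f \cdot \nabla g$, and the right-hand side stays $\le$ (something tending to $0$), yielding the conclusion.

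The main obstacle I anticipate is controlling the shell term $\int_{J \setminus J_\delta} g\,\Delta f_\delta\,\dx$ uniformly in $\delta$, because $\Delta f_\delta$ could a priori have large mass concentrating near $\partial J$ (that is exactly where the subharmonic function $f$ "turns the corner"). The resolution is to bound the total mass $\int_{K} |\Delta f_\delta|\,\dx$ on compact sets $K$ independently of $\delta$: since $f_\delta$ is subharmonic on $J_\delta$, $\Delta f_\delta \ge 0$ there, and its integral over a compact subset is controlled via Green's identity by the $C^1$-norm of $f_\delta$ on a slightly larger set (hence by a fixed constant depending on $f$ near that set, using $f \in C(\R^d) \cap W^{1,2}$). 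Combined with $\sup_{J\setminus J_\delta} g \to 0$, this kills the shell term. An alternative, perhaps slicker, is to integrate by parts the other way: write $\int \nabla f_\delta \cdot \nabla g = \int \nabla f_\delta \cdot \nabla g$ and use that $g \cdot \mathbf{1}_{J^c} = 0$ together with $g = g\,\mathbf{1}_{\overline J}$, reducing everything to an integral over $\overline J$ and invoking the variational characterization of subharmonicity ($\int_J \nabla f \cdot \nabla \psi \le 0$ for all $\psi \ge 0$ in $C_c^\infty(J)$, then extended to $W^{1,2}_0(J)$), after checking that $g$ restricted to $J$ lies in $W^{1,2}_0(J)$ — which holds because $g$ is Lipschitz, nonnegative, and vanishes on $\partial J$. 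I would present the mollification argument as the primary proof since it sidesteps the need to identify the function space $W^{1,2}_0(J)$ for a general open set $J$.
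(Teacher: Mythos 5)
Your overall strategy is the same as the paper's: mollify $f$, integrate by parts against $g$, split the integral into $J_\delta$ and the shell $J\setminus J_\delta$, and use $\Delta f_\delta\geq 0$, $g\geq 0$ on $J_\delta$. The genuine gap is in your resolution of what you correctly identify as the main obstacle, the shell term. You propose to bound $\int_K |\Delta f_\delta|\,\dx$ uniformly in $\delta$ ``since $f_\delta$ is subharmonic on $J_\delta$, so $\Delta f_\delta\geq 0$ there and Green's identity controls its mass'' --- but the shell $J\setminus J_\delta$ is disjoint from $J_\delta$: it is exactly the region where no sign information on $\Delta f_\delta$ is available, so the mass-of-a-nonnegative-measure argument does not apply there. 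Across $\partial J$ the distribution $\Delta f$ is merely the Laplacian of a $W^{1,2}\cap C$ function (an $H^{-1}$-type object, not a measure), and the natural estimate on the shell is $\|\Delta f_\delta\|_{2}\leq \delta^{-1}\|\nabla f\|_2\,\|\nabla\varphi\|_1$, which blows up as $\delta\to 0$; the merely qualitative smallness $\sup_{J\setminus J_\delta} g\to 0$ cannot compensate a mass that is not uniformly bounded. As written, the shell estimate does not close, and this is the heart of the lemma.

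The fix --- and what the paper does --- is to use the Lipschitz constant of $g$ quantitatively rather than qualitatively: since $g$ vanishes on $J^c$ (hence on $\partial J$) and ${\rm Lip}(g)=C$, one has $|g|\leq C\delta$ on $J\setminus J_\delta$, and writing $\partial_{x_ix_i}f_\delta=(\partial_{x_i}f)*\bigl(\delta^{-1}(\partial_{x_i}\varphi)_\delta\bigr)$ the factor $\delta$ exactly cancels the $\delta^{-1}$; then H\"older and Young give $\int_{J\setminus J_\delta}|g\,\Delta f_\delta|\,\dx\leq C\,d\,\|\nabla f\|_2\,\|\nabla\varphi\|_1\,m(J\setminus J_\delta)^{1/2}$. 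To guarantee $m(J\setminus J_\delta)\to 0$ the paper first reduces to $g$ of compact support (replace $g$ by $g\Psi_N$ with a smooth cutoff and pass to the limit in $W^{1,2}$); this reduction is also missing from your argument and would be needed in any case, since your compact-set bound could not cover an unbounded shell. Your sketched alternative --- extend $\int_J\nabla f\cdot\nabla\psi\leq 0$ from $0\leq\psi\in C^\infty_c(J)$ to $g|_J\in W^{1,2}_0(J)$ --- can be made to work, but the membership $g|_J\in W^{1,2}_0(J)$ is precisely the point requiring proof (e.g.\ approximate by $(g-\varepsilon)_+$, whose support is compact and contained in $J$ because $g$ is Lipschitz and in $L^2$, then mollify); as stated it is an assertion, not an argument.
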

\begin{proof}
Formally, the identity 
$$\int_{\R^d} \nabla f\,.\,\nabla g\,\,\dx = \int_{\R^d} (-\Delta f)\,g\,\dx$$
would imply the result, since $f$ subharmonic in the set $\{g>0\}$ would mean $-\Delta f \leq 0$ in this set. This justifies the intuition for the result. Our work here is to make this argument rigorous. 

\medskip

Our first claim is that we can assume without loss of generality that $g$ has compact support. To see this let $\Psi \in C^{\infty}_c(\R^d)$ be a non-negative function such that $\Psi(x) \equiv 1$ in $B_1$ and $\supp(\Psi) \subset B_2$. For $N \in \N$, let $\Psi_N(x) := \Psi(x/N)$ and consider $g_N(x) := g(x)\,\Psi_N(x)$. If we could prove the result for each $g_N$ (note that $f$ is subharmonic in the set $\{g_N>0\}$ and each $g_N$ is Lipschitz since $g$ is bounded), since $g_N \to g$ in $W^{1,2}(\R^d)$ we would obtain 
\begin{equation*}
\int_{\R^d} \nabla f\,.\,\nabla g\,\,\dx = \lim_{N\to \infty} \int_{\R^d} \nabla f\,.\,\nabla g_N\,\,\dx \leq 0. 
\end{equation*}
From now on we assume that $\supp(g) \subset B_R$ for some $R>0$.

\medskip

Consider a non-negative function $\varphi \in C^{\infty}_c(\R^d)$ with support on the unit ball $B_1$ and integral $1$. For $\varepsilon >0$ put $\varphi_{\varepsilon}(x) = \varepsilon^{-d}\varphi(x/\varepsilon)$ and write
$$f_{\varepsilon} = f *\varphi_{\varepsilon}.$$
We see that $f_{\varepsilon} \in C^{\infty}(\R^d)$ and it is not hard to check that 
\begin{equation*}
\partial_{x_i} f_{\varepsilon} =  (\partial_{x_i} f) *\varphi_{\varepsilon} = f*(\partial_{x_i}\varphi_{\varepsilon}),
\end{equation*}
and
\begin{equation}\label{lem5_laplace}
\partial_{x_ix_i} f_{\varepsilon} =  (\partial_{x_i} f) *(\partial_{x_i} \varphi_{\varepsilon}) = f*(\partial_{x_ix_i}\varphi_{\varepsilon}).
\end{equation}

\smallskip

\noindent Let us define the set $J_{\varepsilon} = \{x \in J;\ {\rm dist}(x,\partial J) >\varepsilon\}$. A simple computation shows that $f_{\varepsilon}$ is subharmonic on $J_{\varepsilon}$. In fact, if $x \in J_{\varepsilon}$ and $\overline{B_r(x)}\subset J_{\varepsilon}$, we have
\begin{align*}
f_{\varepsilon}(x) &= \int_{B_{\varepsilon}} f(x - y) \, \varphi_{\varepsilon}(y)\,\dy\\
& \leq  \int_{B_{\varepsilon}} \frac{1}{\sigma_{d-1}}\int_{S^{d-1}} f(x -y +r\xi)\,{\rm d}\sigma(\xi) \,\varphi_{\varepsilon}(y)\,\dy\\
& =  \frac{1}{\sigma_{d-1}}\int_{S^{d-1}} f_{\varepsilon} (x +r\xi)\,{\rm d}\sigma(\xi).
\end{align*}
Since $f_{\varepsilon} \in C^{\infty}(\R^d)$, this implies that $(-\Delta f_{\varepsilon})(x) \leq 0$ for $x \in J_{\varepsilon}$.

\medskip

For $\varepsilon >0$ and $\psi \in C^{\infty}_c(\R^d)$ we can apply integration by parts to get
\begin{equation*}
\int_{\R^d} \nabla f_{\varepsilon}\,.\,\nabla \psi\,\,\dx = \int_{\R^d} (-\Delta f_{\varepsilon})\,\psi\,\dx.
\end{equation*}
Now since $|\nabla f_{\varepsilon}| \in L^2(\R^d)$ and $\Delta f_{\varepsilon} \in L^2(\R^d)$, we might approximate our function $g \in W^{1,2}(\R^d)$ by such $\psi \in C^{\infty}_c(\R^d)$  (in the $W^{1,2}$-norm) to obtain
\begin{align}\label{lem5_limit}
\begin{split}
\int_{\R^d} \nabla f_{\varepsilon}\,.\,\nabla g\,\,\dx & = \int_{\R^d} (-\Delta f_{\varepsilon})\,g\,\dx\\
& = \int_{J \setminus J_{\varepsilon}} (-\Delta f_{\varepsilon})\,g\,\dx + \int_{J_{\varepsilon}} (-\Delta f_{\varepsilon})\,g\,\dx \\
& \leq \int_{J\setminus J_{\varepsilon}} (-\Delta f_{\varepsilon})\,g\,\dx.
\end{split}
\end{align}
Let $C$ be the Lipschitz constant of $g$. Then, for any $x \in J\setminus J_{\varepsilon}$ we have 
\begin{equation}\label{lem5_eq2}
|g(x)| \leq C \varepsilon.
\end{equation}
From \eqref{lem5_laplace} we observe that 
\begin{equation}\label{lem5_eq3}
\partial_{x_ix_i} f_{\varepsilon} = (\partial_{x_i} f) *\left(\varepsilon^{-1} (\partial_{x_i} \varphi)_{\varepsilon}\right),
\end{equation}
where $(\partial_{x_i} \varphi)_{\varepsilon}(x) = \varepsilon^{-d} (\partial_{x_i} \varphi)(x/\varepsilon)$. From \eqref{lem5_eq2} and \eqref{lem5_eq3}, using H\"{o}lder's inequality and Young's inequality we get 
\begin{align}\label{lem5_limit2}
\begin{split}
\int_{J \setminus J_{\varepsilon}} &\big|(-\Delta f_{\varepsilon})\,g\big|\,\dx  \leq C \int_{J \setminus J_{\varepsilon}} \sum_{i=1}^d  \big|(\partial_{x_i} f) *(\partial_{x_i}\varphi)_{\varepsilon} \big|\,\dx\\
& \leq C\,d \int_{J \setminus J_{\varepsilon}}  |\nabla f|*(|\nabla\varphi|)_{\varepsilon} \,\,\dx\\
& \leq C\,d \left(\int_{J \setminus J_{\varepsilon}} \big| |\nabla f|*(|\nabla\varphi|)_{\varepsilon}\big|^2 \,\,\dx\right)^{1/2} \, m(J \setminus J_{\varepsilon})^{1/2}\\
& \leq C\,d \ \|\nabla f\|_2 \ \|\nabla\varphi\|_1 \ m(J \setminus J_{\varepsilon})^{1/2}.
\end{split}
\end{align}
Since $\supp(g) \subset B_R$, we know that $m(J \setminus J_{\varepsilon}) \to 0$ as $\epsilon \to 0$. Finally, since $f_{\varepsilon} \to f$ in $W^{1,2}(\R^d)$ we use \eqref{lem5_limit} and \eqref{lem5_limit2} to get
\begin{align*}
\int_{\R^d} \nabla f\,.\,\nabla g\,\,\dx & = \lim_{\varepsilon \to 0} \int_{\R^d} \nabla f_{\varepsilon}\,.\,\nabla g\,\,\dx\\
& \leq \lim_{\varepsilon \to 0} \int_{J \setminus J_{\varepsilon}} (-\Delta f_{\varepsilon})\,g\,\dx = 0.
\end{align*}
\end{proof}

\begin{lemma}[Reduction to the Lipschitz case]\label{lem7}
In order to establish Theorem \ref{thm_cont} - parts (i) and (iv) - it suffices to consider the initial datum $u_0$ Lipschitz.
\end{lemma}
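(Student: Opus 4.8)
The plan is to approximate a general $u_0 \in W^{1,p}(\R^d)$ (for $p=2$ in any dimension, or $1<p\le\infty$ in dimension $1$) by Lipschitz functions, and then pass the variation-diminishing inequality $\|\nabla u^*\|_p \le \|\nabla u_0\|_p$ to the limit. First I would mollify: set $u_0^{\varepsilon} = u_0 * \varphi_{\varepsilon}$ for a standard nonnegative smooth mollifier $\varphi_{\varepsilon}$. Each $u_0^{\varepsilon}$ is smooth, and — crucially for the Lipschitz claim — bounded with bounded gradient: when $p=\infty$ we already have $u_0$ Lipschitz after adjusting on a null set, and when $p<\infty$ we have $\|u_0^{\varepsilon}\|_\infty \le \|u_0\|_p\|\varphi_\varepsilon\|_{p'}<\infty$ and $\mathrm{Lip}(u_0^{\varepsilon}) \le \|\nabla u_0\|_p \|\varphi_\varepsilon\|_{p'}<\infty$, so $u_0^{\varepsilon}$ is bounded and Lipschitz. (If one wants $u_0^{\varepsilon}$ itself to lie in $W^{1,p}$, one may additionally truncate spatially, but this is a routine second approximation and I would only invoke it if needed.) We also have $u_0^{\varepsilon} \to u_0$ in $W^{1,p}(\R^d)$ as $\varepsilon\to 0$ when $p<\infty$, and $\nabla u_0^{\varepsilon} \to \nabla u_0$ in the weak-$*$ sense of $L^\infty$ when $p=\infty$, while $\|\nabla u_0^{\varepsilon}\|_p \le \|\nabla u_0\|_p$ by Young's inequality (since $\|\varphi_\varepsilon\|_1=1$).

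Next I would relate $(u_0^{\varepsilon})^*$ to $u^*$. Since the heat semigroup commutes with convolution, $|u_0^{\varepsilon}| * K_t = \big| (|u_0|*K_t)*\varphi_\varepsilon\big|$ up to the elementary bound $|u_0^\varepsilon| \le |u_0|*\varphi_\varepsilon$; more directly, $|u_0^{\varepsilon}|*K_t \le (|u_0|*\varphi_\varepsilon)*K_t = (|u_0|*K_t)*\varphi_\varepsilon \le u^* * \varphi_\varepsilon$, so taking the supremum over $t$ gives $(u_0^{\varepsilon})^* \le u^* * \varphi_\varepsilon$. On the other hand, to get convergence $(u_0^{\varepsilon})^* \to u^*$ one uses the sublinearity estimate already exploited in the proof of Lemma \ref{lem4}: $\big| (u_0^{\varepsilon})^*(x) - u^*(x)\big| \le \big(|u_0^{\varepsilon}|-|u_0|\big)^*(x)$ pointwise, hence in $L^p$ by the $L^p$-boundedness of the heat maximal operator (which follows from \eqref{orig_bound}), we have $(u_0^{\varepsilon})^* \to u^*$ in $L^p(\R^d)$ when $1<p<\infty$, and locally uniformly (via Lemma \ref{lem4}(ii) and an $L^\infty$ version of the same bound) when $p=\infty$.

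Finally I would assemble the limiting argument. Assuming the Lipschitz case of Theorem \ref{thm_cont}(i),(iv) is known, we have $\|\nabla (u_0^{\varepsilon})^*\|_p \le \|\nabla u_0^{\varepsilon}\|_p \le \|\nabla u_0\|_p$ for every $\varepsilon>0$. Since $(u_0^{\varepsilon})^* \to u^*$ in $L^p_{\mathrm{loc}}$ (hence as distributions), the distributional gradients converge, $\nabla (u_0^{\varepsilon})^* \to \nabla u^*$ in $\mathcal{D}'$; combined with the uniform bound on $\|\nabla(u_0^{\varepsilon})^*\|_p$, weak-$*$ compactness of bounded sets in $L^p$ ($1<p\le\infty$) yields that $u^*$ has a weak gradient in $L^p$ with $\|\nabla u^*\|_p \le \liminf_{\varepsilon\to 0}\|\nabla (u_0^{\varepsilon})^*\|_p \le \|\nabla u_0\|_p$, which is exactly the conclusion. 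The main obstacle I anticipate is the bookkeeping in the convergence step — ensuring that $(u_0^{\varepsilon})^* \to u^*$ in a topology strong enough to identify the limit of the gradients while only using the pointwise sublinear bound $|(u_0^{\varepsilon})^* - u^*| \le (|u_0^{\varepsilon}| - |u_0|)^*$ together with the (already available) $L^p$ mapping properties of the maximal operator; everything else (mollification estimates, Young's inequality, lower semicontinuity of the $L^p$ norm under weak-$*$ limits) is routine.
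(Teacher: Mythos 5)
Your argument is correct, but it takes a slightly different route from the paper's. You mollify with a generic compactly supported $\varphi_{\varepsilon}$ and then prove convergence of the maximal functions via the sublinearity bound $|(u_0^{\varepsilon})^{*}-u^{*}|\leq\big(|u_0^{\varepsilon}|-|u_0|\big)^{*}$ together with the $L^p$-boundedness of the heat maximal operator (available for $1<p<\infty$ from \eqref{orig_bound}); the paper instead mollifies with the heat kernel itself, $u_{\varepsilon}=u_0*K_{\varepsilon}$, so that by the semigroup property $u_{\varepsilon}^{*}(x)=\sup_{\tau>0}u_{\varepsilon}*K_{\tau}(x)=\sup_{t>\varepsilon}u_0*K_t(x)$, and pointwise convergence $u_{\varepsilon}^{*}\to u^{*}$ is then immediate, with no appeal to maximal-operator bounds or to sublinearity. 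Both proofs finish identically: Young's inequality gives $\|\nabla u_{\varepsilon}\|_p\leq\|\nabla u_0\|_p$, the Lipschitz case gives a uniform $W^{1,p}$ bound on the maximal functions, and weak compactness plus lower semicontinuity of the norm yields the conclusion. What the paper's choice buys is economy and portability: the same semigroup trick is what lets the reduction be reused verbatim for the Poisson maximal operator (which also satisfies $P_{y_1}*P_{y_2}=P_{y_1+y_2}$) and for the truncated operators appearing in the proof of part (iii), whereas your route leans on the $L^p$ mapping property, which is harmless here but unavailable at $p=1$. One small simplification to your write-up: the spatial truncation you contemplate is unnecessary, since Young's inequality already gives $u_0*\varphi_{\varepsilon}\in W^{1,p}(\R^d)$ with $\|u_0*\varphi_{\varepsilon}\|_{W^{1,p}}\leq\|u_0\|_{W^{1,p}}$; likewise the digression bounding $|u_0^{\varepsilon}|*K_t$ by $u^{*}*\varphi_{\varepsilon}$ is not needed for the final limiting argument, and for $p=\infty$ the case is settled outright (as you note) because $u_0$ may be taken Lipschitz, so the weak-$*$ discussion there can be dropped.
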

\begin{proof}
If $p=\infty$, recall that a function $u_0 \in W^{1,\infty}(\R^d)$ can be modified on a set of measure zero to become bounded and Lipschitz continuous. 

\medskip

If $1 < p <\infty$, we take $\varepsilon >0$ and consider
$$u_{\varepsilon}(x) = u_0*K_{\varepsilon}(x).$$
It is clear that $u_{\varepsilon}$ is Lipschitz continuous. Suppose that the result is true for $u_{\varepsilon}$, i.e. that $u_{\varepsilon}^*\in W^{1,p}(\R^d)$ and
\begin{equation}\label{lem7_eq1}
\|\nabla u_{\varepsilon}^*\|_p \leq \|\nabla u_{\varepsilon}\|_p\,,
\end{equation}
where 
\begin{equation*}
u_{\varepsilon}^*(x)  = \sup_{\tau > 0} u_{\varepsilon}*K_{\tau}(x) = \sup_{t > \varepsilon} u_0*K_t(x).
\end{equation*}
From Young's inequality we have 
\begin{equation}\label{lem7_eq2}
\|u_{\varepsilon}\|_p \leq \|u_0\|_p
\end{equation}
and, together with Minkowski's inequality, we also have
\begin{equation}\label{lem7_eq3}
\|\nabla u_{\varepsilon}\|_p \leq \|\nabla u_0\|_p
\end{equation}
for any $\varepsilon >0$. From \eqref{lem7_eq1}, \eqref{lem7_eq2} and \eqref{lem7_eq3} we find that $u_{\varepsilon}^*$ is uniformly bounded in $W^{1,p}(\R^d)$. Note that $u_{\varepsilon}^*$ converges pointwise to $u^*$ as $\epsilon \to 0$. From the weak compactness of $W^{1,p}(\R^d)$ we then conclude that $u^* \in W^{1,p}(\R^d)$ and $u_{\varepsilon}^* \rightharpoonup u^*$ as $\varepsilon \to 0$. Therefore 
$$\|\nabla u^*\|_p \leq \liminf_{\varepsilon \to 0} \|\nabla u_{\varepsilon}^*\|_p \leq  \liminf_{\varepsilon \to 0}  \|\nabla u_{\varepsilon}\|_p \leq \|\nabla u_0\|_p\,,$$
which completes the proof.
\end{proof}

\subsection{Proof of part (iv)} In the case $p=\infty$, we know that $u_0 \in W^{1,\infty}(\R^d)$ can be modified on a set of measure zero to become Lipschitz continuous with ${\rm Lip}(u_0) \leq \|\nabla u_0\|_{\infty}$. From Lemma \ref{lem4}, $u^*$ will also be bounded and Lipschitz continuous, with ${\rm Lip}(u^*) \leq {\rm Lip}(u_0)$, and the result follows, since in this case $u^* \in W^{1,\infty}(\R^d)$ with $\|\nabla u^*\|_{\infty} \leq {\rm Lip}(u^*)$. 

\medskip

If $p=2$, we are essentially done as well. In fact, from Lemma \ref{lem7} it suffices to consider the case $u_0$ Lipschitz continuous in $W^{1,2}(\R^d)$. In this case, from Lemma \ref{lem4} we know that $u^*$ is also Lipschitz continuous, and from Lemma \ref{lem5} we have that $u^*$ is subharmonic in the open set $A = \{x \in \R^d; \,u^*(x) > u_0(x)\}$. Recall from our discussion in the introduction of the paper that we already have $u^* \in W^{1,2}(\R^d)$, and thus the hypotheses of Lemma \ref{lem6} apply to $f = u^*$ and $g = (u^* - u_0)$. Therefore,
\begin{align*}
\|\nabla u_0\|_2^2 &= \int_{\R^d} |\nabla u_0|^2\,\dx = \int_{\R^d} |\nabla u^* - \nabla (u^* - u_0)|^2\,\dx\\
& = \int_{\R^d} |\nabla (u^*-u_0)|^2\,\dx - 2 \int_{\R^d} \nabla u^*\,.\,\nabla (u^*-u_0)\,\dx + \int_{\R^d} |\nabla u^*|^2\,\dx\\
& \geq \int_{\R^d} |\nabla u^*|^2\,\dx = \|\nabla u^*\|_2^2\,,
\end{align*}
which concludes the proof.

\subsection{Proof of part (i) - case $1< p < \infty$} 

\subsubsection*{Step 1: Set up} The initial considerations are the same as before. From Lemma \ref{lem7} it suffices to consider the case $u_0$ Lipschitz continuous in $W^{1,p}(\R)$. From Lemma \ref{lem4} we know that $u^*$ is also Lipschitz, and from Lemma \ref{lem5} we have that $u^*$ is subharmonic in the open set $A = \{x \in \R; \,u^*(x) > u_0(x)\}$. Let us explore the structure of $\R$ to write the open set $A$ as a countable union of disjoint open intervals
\begin{equation*}
A = \bigcup_{j} I_j =  \bigcup_{j} \, (\alpha_j, \beta_j).
\end{equation*}
Note also that subharmonicity is equivalent to convexity in each $(\alpha_j, \beta_j)$, when we deal with continuous functions.

\subsubsection*{Step 2: Zorn's lemma} As we did in the discrete case, we use here an argument based on Zorn's lemma. Define the family of Lipschitz continuous functions 
\begin{equation*}
\mc{S} = 
\left\{
\begin{array}{cc}
f:\R \to \R;\\
\\
u_0(x) \leq f(x) \leq u^*(x);& \ \ \forall x \in \R;\\
\\
{\rm Lip}(f) \leq {\rm Lip}(u_0);&\\
\\
\|f'\|_p \leq \|u_0'\|_p;&
\end{array}
\right.
\end{equation*}
The family $\mc{S}$ is non-empty since $u_0 \in \mc{S}$. We put a partial order $\preceq$ in $\mc{S}$ by considering the pointwise order (i.e. $f \preceq g$ in $\mc{S}$ if and only if $f(x) \leq g(x)$ for all $x \in \R$). Let us prove that $(\mc{S},\preceq)$ is inductive. Let $\{f_{\alpha}\}_{\alpha \in \Lambda}$ be a totally ordered subset and define
$$\overline{f}(x) = \sup_{\alpha  \in \Lambda}f_{\alpha}(x).$$
We claim that $\overline{f} \in \mc{S}$. Being a pointwise supremum of uniformly Lipschitz functions, we have ${\rm Lip}(\overline{f})\leq  {\rm Lip}(u_0)$. Let us write $L = {\rm Lip}(u_0)$. For each $N \in \N$ consider the $2N^2 +1$ points $\{j/N\}$ with $-N^2 \leq j \leq N^2$, $j \in \Z$. For each of these $j's$ choose a function $f_{j,N} \in \Lambda$ such that $\overline{f}(j/N) - f_{j,N}(j/N) < \frac{1}{N}$. Then choose 
$$f_N = \sup_{-N^2 \leq j \leq N^2} f_{j,N}.$$
It is clear that for each $x \in [-N,N]$, since $|x-j/N| \leq 1/2N$ for some $j$, we have
$$\overline{f}(x) - f_N(x) \leq \frac{1}{N} + \frac{L}{N}.$$
Therefore $f_N \to \overline{f}$ pointwise as $N\to \infty$. From the conditions on the family $\mc{S}$ we know that $\|f_N\|_{W^{1,p}}$ is uniformly bounded, and from the weak compactness of $W^{1,p}(\R)$ we must have $\overline{f} \in W^{1,p}(\R)$ and $f_N \rightharpoonup \overline{f}$. We then arrive at the bound
$$\big\|\big(\overline{f}\big)'\big\|_p \leq \liminf_{N\to \infty} \|f_N'\|_p \leq \|u_0'\|_p\,,$$
which shows that $\overline{f} \in \mc{S}$. From Zorn's lemma we guarantee the existence of (at least) one maximal element in $(\mc{S}, \preceq)$, which we call $g$.

\subsubsection*{Step 3: Finding an appropriate segment to cut} We want to show that $g = u^*$. Suppose this is not the case, i.e. that the open set $B = \{x \in \R; \,u^*(x) > g(x)\} \subset A$ is non-empty. Let us write $B$ as a countable union
\begin{equation*}
B= \bigcup_{l} Q_l =  \bigcup_{l} \, (\gamma_l, \delta_l).
\end{equation*}
We claim that $g$ cannot be superharmonic on $B$. In fact, if one of the intervals $(\gamma_l, \delta_l)$ is finite, since $u^*(\gamma_l) = g(\gamma_l)$ and $u^*(\delta_l) = g(\delta_l)$, the maximum principle would give us $u^* \equiv g$ in $[\gamma_l, \delta_l]$, a contradiction. If the interval is of type $(\gamma_l, \infty)$ (resp. $(-\infty, \delta_l)$), we would have $(u^* - g)$ strictly positive and convex in $(\gamma_l, \infty)$, with $(u^* - g)(\gamma_l)=0$. This is a contradiction since $(u^* - g)$ is Lipschitz and belongs to $L^p(\R)$, and thus must tend to zero at infinity. To conclude the proof of the claim, note that we cannot have $B = (-\infty,\infty)$, since $u_0(x_0) = g(x_0) = u^*(x_0)$ at the global maximum $x_0$ of $u_0$. 

\medskip

Therefore, there exists an interval $[a,b] \subset B$ such that 
\begin{equation}\label{pf_thm1_eq1}
g\left(\frac{a+b}{2}\right) < \frac{g(a) + g(b)}{2}.
\end{equation}
Let $\ell(x)$ be the equation of the line connecting the points $(a, g(a))$ and $(b,g(b))$, i.e.
$$l(x) = \frac{g(b) - g(a)}{b-a}(x-a) + g(a).$$
Let us consider the functions $\widetilde{u^*}(x) := u^*(x) - \ell(x)$ and  $\widetilde{g}(x) := g(x) - \ell(x)$. Let $y_0$ be the point of minimum of $\widetilde{g}$ when restricted to $[a,b]$. From \eqref{pf_thm1_eq1} we have $\widetilde{g}(y_0) \leq \widetilde{g}((a+b)/2) < 0$. We claim that there exists a line $\widetilde{\ell}$ parallel to the $x$-axis such that the graph of $\widetilde{u^*}$ is above $\widetilde{\ell}$ and the graph of $\widetilde{g}$ is below $\widetilde{\ell}$ in a neighborhood of $y_0$. To see this start by noting that $\widetilde{u^*}(y_0) - \widetilde{g}(y_0) = C >0$. For each $- \widetilde{g}(y_0) > \varepsilon >0$ let 
$$a_{\varepsilon} = \max\{a \leq x \leq y_0;\ \widetilde{g}(x) \geq \widetilde{g}(y_0) + \varepsilon\}$$
and
$$b_{\varepsilon} = \min\{y_0 \leq x \leq b;\ \widetilde{g}(x) \geq \widetilde{g}(y_0) + \varepsilon\}.$$
From this we have $\widetilde{g}(x) \leq \widetilde{g}(y_0) + \varepsilon$, for each $x \in [a_{\varepsilon}, b_{\varepsilon}]$, with equality on the endpoints. Suppose that for each $\varepsilon >0$ there exists a point $z_{\varepsilon} \in [a_{\varepsilon}, b_{\varepsilon}]$ such that $\widetilde{u^*}(z_{\varepsilon}) < \widetilde{g}(y_0) + \varepsilon$. There will be a subsequence of $\{z_{\varepsilon}\}_{\varepsilon>0}$ that accumulates around a certain $z_0 \in [a,b]$ thus giving 
$$\widetilde{u^*}(z_0) \leq \widetilde{g}(y_0) \leq \widetilde{g}(z_0) < \widetilde{u^*}(z_0),$$
a contradiction. Therefore, we can find an $\varepsilon>0$ such that 
$$\widetilde{u^*}(x) \geq \widetilde{g}(y_0) + \varepsilon$$ 
for each $x \in [a_{\varepsilon}, b_{\varepsilon}]$.

\medskip

If we undo the $\thicksim$ operation and return to the original picture, we have found a finite interval $[a_{\varepsilon}, b_{\varepsilon}]$ such that $g$ is below the line connecting $(a_{\varepsilon}, g(a_{\varepsilon}))$ to $(b_{\varepsilon}, g(b_{\varepsilon}))$ in $[a_{\varepsilon}, b_{\varepsilon}]$ (being strictly below in $(a_{\varepsilon}, b_{\varepsilon})$) and $u^*$ is above this line.

\subsubsection*{Step 4: Conclusion} Let us define a new function
\begin{equation*}
h(x) = \left\{
\begin{array}{lr}
g(x)\,,&{\rm if}\ x \notin [a_{\varepsilon}, b_{\varepsilon}]\\
\\
\displaystyle\frac{g(b_{\varepsilon}) - g(a_{\varepsilon})}{b_{\varepsilon}-a_{\varepsilon}}(x-a_{\varepsilon}) + g(a_{\varepsilon})\,,&{\rm if}\ x \in [a_{\varepsilon}, b_{\varepsilon}].
\end{array}
\right.
\end{equation*}
From the previous step we know that $u_0 \leq h \leq u^*$ and it is also clear that ${\rm Lip}(h) \leq {\rm Lip}(g) \leq {\rm Lip}(u_0)$. Finally, by Jensen's inequality we obtain
\begin{align*}
\|g'\|_p^p  & = \int_{[a_{\varepsilon}, b_{\varepsilon}]^c} |g'(x)|^p\,\dx + \int_{[a_{\varepsilon}, b_{\varepsilon}]} |g'(x)|^p\,\dx \\
& \geq  \int_{[a_{\varepsilon}, b_{\varepsilon}]^c} |g'(x)|^p\,\dx + (b_{\varepsilon}-a_{\varepsilon}) \left(\int_{a_{\varepsilon}}^{b_{\varepsilon}} |g'(x)| \frac{\dx}{(b_{\varepsilon}-a_{\varepsilon})}\right)^p\\
&  \geq \int_{[a_{\varepsilon}, b_{\varepsilon}]^c} |g'(x)|^p\,\dx + (b_{\varepsilon}-a_{\varepsilon}) \left|\int_{a_{\varepsilon}}^{b_{\varepsilon}} g'(x) \frac{\dx}{(b_{\varepsilon}-a_{\varepsilon})}\right|^p\\
& = \int_{[a_{\varepsilon}, b_{\varepsilon}]^c} |g'(x)|^p\,\dx + (b_{\varepsilon}-a_{\varepsilon}) \left| \frac{g(b_{\varepsilon}) - g(a_{\varepsilon})}{(b_{\varepsilon}-a_{\varepsilon})}\right|^p\\
& = \int_{\R} |h'(x)|^p\,\dx = \|h'\|_p^p.
\end{align*}
Therefore we get that $h \in \mc{S}$ but this is a contradiction since $h$ is strictly bigger than the maximal element $g$ in $(a_{\varepsilon}, b_{\varepsilon})$. This shows that $g = u^*$ and the proof is concluded. 

\subsection{Proof of part (ii)} The argument we shall use for this part is inspired in Tanaka's \cite{Ta}. Recall that when $u_0 \in W^{1,1}(\R)$, after adjusting on a set of measure zero, $u_0$ may be taken to be absolutely continuous. From Lemma \ref{lem4}  we see that $u^*$ is also continuous and the set $A = \{x \in \R; \ u^*(x) > u_0(x)\}$ is open. Let us again write $A$ as a countable union of disjoint open intervals
\begin{equation*}
A = \bigcup_{j} I_j =  \bigcup_{j} \, (\alpha_j, \beta_j).
\end{equation*}
From Lemma \ref{lem5} we know that $u^*$ is subharmonic (thus convex) in each subinterval $I_j = (\alpha_j, \beta_j)$. Therefore, $u^*$ must be locally Lipschitz on each $I_j$ and, in particular, it is absolutely continuous on each compact subinterval of $I_j$. From this we conclude that $u^*$ is differentiable a.e. on each $I_j$, with derivative that we will denote by $v$. 

\smallskip

We observe now that in each subinterval $I_j$ the variation of $u^*$ is smaller than the variation of $u_0$. In fact, since $u^*$ is convex on $I_j$,  let  $\gamma_j \in [\alpha_j, \beta_j]$ be a minimum of $u^*$ on $[\alpha_j, \beta_j]$ (note that we might have $\gamma_j = \alpha_j$ or $\gamma_j = \beta_j$). The crucial observation is that $u^*$ is monotone in $[\alpha_j, \gamma_j]$ and in $[\gamma_j, \beta_j]$, thus leading to (using the continuity of $u^*$ and approaching by compacts from inside $I_j$)
\begin{align}\label{part(ii)_eq3}
\begin{split}
\int_{I_j} |v(x)|\,\dx& = \big[u^*(\alpha_j) - u^*(\gamma_j)\big] + \big[u^*(\beta_j) - u^*(\gamma_j)\big]\\
& \leq \big[u_0(\alpha_j) - u_0(\gamma_j)\big] + \big[u_0(\beta_j) - u_0(\gamma_j)\big]\\
& \leq \int_{\alpha_j}^{\gamma_j} |u_0'(x)|\,\dx + \int_{\gamma_j}^{\beta_j} |u_0'(x)|\,\dx\\
& = \int_{I_j} |u_0'(x)|\,\dx.
\end{split}
\end{align}
Note that in case $\alpha_j = -\infty$ (resp. $\beta_j = \infty$) we have $u_0(\alpha_j) = 0$ and $u^*(\alpha_j) =0$ (resp. $u_0(\beta_j) = 0$ and $u^*(\beta_j) = 0$) due to the fact that $u_0 \in W^{1,1}(\R)$ and $u^*\in L^1_{weak}(\R)$ and is convex on $I_j$. In particular, since $u_0 \in W^{1,1}(\R)$, we have that $v \in L^1(A)$. 

\smallskip

We shall prove now that $u^*$ is weakly differentiable with 
\begin{equation}\label{part(ii)_eq2}
(u^*)' = \chi_{A^c} \,u_0' + \chi_{A} \,v,
\end{equation}
where $\chi_{A}$ and $\chi_{A^c}$ denote the indicator functions of the sets $A$ and $A^c$. In fact, let $\varphi \in C^{\infty}_c(\R)$. Observe first that 
\begin{equation}\label{part(ii)_eq1}
\int_{I_j} u^*(x)\,\varphi'(x)\,\dx = \big[u_0(\beta_j)\varphi(\beta_j)- u_0(\alpha_j)\varphi(\alpha_j)\big] - \int_{I_j} v(x)\, \varphi(x)\,\dx,
\end{equation}
obtained again by the continuity of $u^*$ and a limiting argument approaching by compacts from inside $I_j$.  From \eqref{part(ii)_eq1} we have
\begin{align*}
\begin{split}
&\int_{\R} u^*(x)\,\varphi'(x)\,\dx  = \int_{A^c} u^*(x)\,\varphi'(x)\,\dx +  \int_{A} u^*(x)\,\varphi'(x)\,\dx\\
& = \int_{A^c} u_0(x)\,\varphi'(x)\,\dx + \sum_j \big[u_0(\beta_j)\varphi(\beta_j)- u_0(\alpha_j)\varphi(\alpha_j)\big]  - \int_{A} v(x)\, \varphi(x)\,\dx\\
& = \int_{A^c} u_0(x)\,\varphi'(x)\,\dx + \left[\int_{A} u_0(x)\,\varphi'(x)\,\dx + \int_{A} u_0'(x)\,\varphi(x)\,\dx\right] - \int_{A} v(x)\, \varphi(x)\,\dx\\
& =  -\int_{\R} u_0'(x)\,\varphi(x)\,\dx + \int_{A} u_0'(x)\,\varphi(x)\,\dx- \int_{A} v(x)\, \varphi(x)\,\dx\\
& = - \int_{\R} (\chi_{A^c} \,u_0' + \chi_{A} \,v)(x)\,\varphi(x)\,\dx,
\end{split}
\end{align*}
as we wanted to show. 

\smallskip

We are now in position to conclude. From \eqref{part(ii)_eq3} and \eqref{part(ii)_eq2} we have
\begin{align*}
\|(u^*)'\|_1  & = \int_{\R} |(u^*)'(x)|\,\dx \\
& = \int_{A^c} |(u_0)'(x)|\,\dx + \int_{A} |v(x)|\,\dx\\
& \leq  \int_{A^c} |(u_0)'(x)|\,\dx + \int_{A} |(u_0)'(x)|\,\dx = \|(u_0)'\|_1.
\end{align*}

\subsection{Proof of part (iii)} If $u_0$ has bounded variation, then $u_0$ is bounded. The distributional derivative $Du_0$ is a Radon measure with $|Du_0| \leq V(u_0)$, where $|Du_0|$ denotes the total variation of $Du_0$. For $\varepsilon>0$ we consider again
$$u_{\varepsilon}(x) = u_0*K_{\varepsilon}(x).$$
Note that $u_{\varepsilon} \in C^{\infty}(\R)$ is bounded and Lipschitz. We let 
\begin{equation*}
u_{\varepsilon}^*(x)  = \sup_{\tau > 0} u_{\varepsilon}*K_{\tau}(x) = \sup_{t > \varepsilon} u_0*K_t(x).
\end{equation*}
From Lemma \ref{lem5} we know that $u_{\varepsilon}^*$ is subharmonic in the open set A = $\{x \in \R;\ u_{\varepsilon}^*(x) > u_{\varepsilon}(x)\}$. Let us write again
\begin{equation*}
A = \bigcup_{j} I_j =  \bigcup_{j} \, (\alpha_j, \beta_j),
\end{equation*}
and thus we will have $u_{\varepsilon}^*$ convex in each $I_j = (\alpha_j, \beta_j)$. Now consider a partition $\mc{P} = \{x_1,x_2,...,x_N\}$. Refine this partition by including the endpoints $\alpha_j$ and $\beta_j$ for which $x_k \in [\alpha_j, \beta_j]$ for $k=1,2,...,N$. Thus we obtain a new partition $\mc{P'} = \{y_1,...,y_M\} \supset \mc{P},$ where $M\geq N$. We now estimate the variation $V_{\mc{P}}(u_{\varepsilon}^*)$ of the function $u_{\varepsilon}^*$ with respect to the partition $\mc{P}$ by observing that
\begin{align}\label{part(iii)_eq1}
\begin{split}
V_{\mc{P}}(u_{\varepsilon}^*) & = \sum_{i=1}^{N-1} \big| u_{\varepsilon}^*(x_{i+1}) - u_{\varepsilon}^*(x_{i})\big| \\
& \leq V_{\mc{P'}}(u_{\varepsilon}^*) = \sum_{j=1}^{M-1} \big| u_{\varepsilon}^*(y_{j+1}) - u_{\varepsilon}^*(y_{j})\big|\\
& \leq V(u_{\varepsilon})\,,
\end{split}
\end{align}
where in the last inequality we used the convexity of $u_{\varepsilon}^*$ in each $I_j = (\alpha_j, \beta_j)$ and the fact that $u_{\varepsilon}^*$ and $u_{\varepsilon}$ agree at the endpoints $\alpha_j$ and  $\beta_j$ (minor modifications are needed for the cases $\alpha_j = -\infty$ or $\beta_j = \infty$). Since
$$Du_{\varepsilon}(x) = Du_0*K_{\varepsilon}(x),$$
from Young's inequality we have
\begin{equation}\label{part(iii)_eq2}
V(u_{\varepsilon}) =  |Du_{\varepsilon}| \leq |Du_0|\,\|K_{\varepsilon}\|_1 = |Du_0| \leq V(u_0).
\end{equation}

\smallskip

We now observe that as $\varepsilon \to 0$, we have $u_{\varepsilon}^* \to u^*$ pointwise. From \eqref{part(iii)_eq1} and \eqref{part(iii)_eq2} we find that 
\begin{align*}
V_{\mc{P}}(u^*) & = \sum_{i=1}^{N-1} \big| u^*(x_{i+1}) - u^*(x_{i})\big|\\
& = \lim_{\varepsilon \to 0} \sum_{i=1}^{N-1} \big| u_{\varepsilon}^*(x_{i+1}) - u_{\varepsilon}^*(x_{i})\big| \leq V(u_0).
\end{align*}
Since this holds for any partition $\mc{P}$ we obtain
$$V(u^*) \leq V(u_0),$$
and the proof is concluded.

%%%%%%%%%%%%%%%%%%%%%%%%%%%%%%%%%%%%%%%%%%%%%%%%%%%%%%%%%%%%%%%%%%%%%%%%%%%%%%%%%%%%%%%%%%%%%%%%%%%%%%%%%%%%%%%%%%%%%%%%%%%

\section{Proof of Theorem \ref{thm_Poisson} - Continuous Poisson kernel}
Throughout this proof we will again assume without loss of generality that $u_0 \geq 0$. 

\subsection{Preliminaries} The first result of this section is analogous to Lemma \ref{lem4}.

\begin{lemma}\label{lem9}.
\begin{itemize}
\item[(i)] If $u_0 \in C(\R) \cap L^p(\R^d)$, for some $1\leq p < \infty$, then $u^{\star} \in C(\R^d)$. 
\smallskip
\item[(ii)] If $u_0$ is bounded and Lipschitz continuous then $u^{\star}$ is bounded and Lipschitz continuous with ${\rm Lip}(u^{\star}) \leq {\rm Lip}(u_0)$.
\end{itemize}
\end{lemma}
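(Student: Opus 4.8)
The plan is to mirror the proof of Lemma \ref{lem4}, replacing the heat kernel $K_t$ by the Poisson kernel $P_y$ throughout, and adjusting the estimates to accommodate the fact that $P_y$ decays only polynomially (rather than exponentially) in $|x|$. The two parts are essentially independent.

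For part (ii), the argument is word-for-word the same as in Lemma \ref{lem4}(ii). If $u_0$ is bounded by $M$ and has Lipschitz constant $L$, then for each fixed $y>0$ the convolution $u_0 * P_y$ is bounded by $M$ (since $P_y \geq 0$ and $\int_{\R^d} P_y = 1$) and has Lipschitz constant at most $L$, because $|(u_0*P_y)(x) - (u_0*P_y)(x')| \leq \int_{\R^d} |u_0(x-z) - u_0(x'-z)|\,P_y(z)\,\dz \leq L\,|x-x'|$. The pointwise supremum over $y>0$ of a family of functions all bounded by $M$ and all $L$-Lipschitz is again bounded by $M$ and $L$-Lipschitz, so ${\rm Lip}(u^\star) \leq {\rm Lip}(u_0)$.

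For part (i), I would follow the same two-regime splitting as in Lemma \ref{lem4}(i): first handle large $y$ using $L^p$-decay of the convolution, then handle $0 < y \leq y_\varepsilon$ using continuity of $u_0$ together with the tail decay of $P_y$. Writing $\tau_h u_0(x) := u_0(x-h)$, by sublinearity of the maximal operator we have $|\tau_h u^\star(x) - u^\star(x)| \leq (\tau_h u_0 - u_0)^\star(x)$, so it suffices to bound $(\tau_h u_0 - u_0)^\star(x)$. For large $y$, Jensen's inequality gives $\big(|\tau_h u_0 - u_0|*P_y\big)(x) \leq \big(|\tau_h u_0 - u_0|^p * P_y(x)\big)^{1/p} \leq \|\tau_h u_0 - u_0\|_p^{} \,\|P_y\|_\infty^{1/p} \leq 2\|u_0\|_p\,\|P_y\|_\infty^{1/p}$, and since $\|P_y\|_\infty = c_d\, y^{-d} \to 0$ as $y \to \infty$, this is $< \varepsilon$ for $y > y_\varepsilon$ uniformly in $x,h$. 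For $0 < y \leq y_\varepsilon$, given $x$ I would split the convolution integral over $\{|z| < \rho\}$ and $\{|z| \geq \rho\}$ for a suitable $\rho$: the inner part is at most $\sup_{w \in B_\rho(x)} |\tau_h u_0 - u_0|(w)$, which is small for $|h|$ small by continuity of $u_0$ (uniform continuity on a compact neighborhood of $x$); the outer part is at most $\|\tau_h u_0 - u_0\|_p \,\|\chi_{\{|z|\geq \rho\}}\,P_y\|_{p'}$, and one checks that $\sup_{0 < y \leq y_\varepsilon} \|\chi_{\{|z|\geq \rho\}}\,P_y\|_{p'} < \infty$ (indeed $\to 0$ as $\rho \to \infty$, since $P_y(z) \leq c_d\, y_\varepsilon^{} |z|^{-(d+1)}$ for $|z|$ large). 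Choosing $\rho$ large and then $|h|$ small gives $(\tau_h u_0 - u_0)^\star(x) < \varepsilon$, proving continuity of $u^\star$ at $x$.

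The only point requiring minor care — and the main (mild) obstacle compared to the heat-kernel case — is verifying $\sup_{0 < y \leq y_\varepsilon}\|\chi_{\{|z| \geq \rho\}}\,P_y\|_{p'} < \infty$ and its decay in $\rho$; here the polynomial tail $P_y(z) = c_d\, y/(|z|^2+y^2)^{(d+1)/2}$ is integrable to any power $p' \geq 1$ against the region $|z| \geq \rho$ provided $(d+1)p' > d$, which always holds, and one uses $P_y(z) \leq c_d\, y_\varepsilon/|z|^{d+1}$ on that region together with the bound $\|P_y\|_1 = 1$ near $y = 0$. For $p = 1$ (so $p' = \infty$) one simply uses $\|\chi_{\{|z|\geq\rho\}}P_y\|_\infty \leq c_d\, y_\varepsilon \rho^{-(d+1)}$, which is even simpler. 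All other steps are identical to Lemma \ref{lem4}.
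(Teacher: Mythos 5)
Your proposal is correct and follows exactly the route the paper intends: the paper's proof of Lemma \ref{lem9} consists of the single line ``just follow the proof of Lemma \ref{lem4},'' and your argument is that proof transcribed for the Poisson kernel, with the only new ingredient being the correct handling of the polynomial tail, namely $\|P_y\|_\infty = c_d\,y^{-d} \to 0$ for the large-$y$ regime and $P_y(z) \leq c_d\,y_\varepsilon |z|^{-(d+1)}$ on $\{|z|\geq\rho\}$ to control $\|\chi_{\{|z|\geq\rho\}}P_y\|_{p'}$ uniformly for $0<y\leq y_\varepsilon$, both of which you verify correctly (and part (ii) is verbatim the same as in Lemma \ref{lem4}).
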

\begin{proof}
Just follow the proof of Lemma \ref{lem4}.
\end{proof}
We now investigate the set where $u^{\star}$ disconnects from $u_0$. As in the heat kernel case, we will show that $u^{\star}$ is subharmonic in this set. The main tool for this will be the structure of the underlying Laplace equation (namely, the mean value property).

\begin{lemma}[Subharmonicity II]\label{lem10} Let $u_0 \in C(\R) \cap L^p(\R^d)$ for some $1\leq p < \infty$ or $u_0$ be bounded and Lipschitz continuous. Then $u^{\star}$ is subharmonic in the open set $A = \{x \in \R^d; \,u^{\star}(x) > u_0(x)\}$.
\end{lemma}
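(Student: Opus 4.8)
The plan is to mimic the proof of Lemma \ref{lem5} (the heat-kernel case), replacing the maximum principle for the heat equation by the maximum principle for Laplace's equation on the half-space cylinder. First I would invoke Lemma \ref{lem9} so that $u^{\star}$ is continuous, hence $A=\{u^{\star}>u_0\}$ is genuinely open. Fix $x_0\in A$ and a radius $r>0$ with $\overline{B_r(x_0)}\subset A$, and let $h$ solve the Dirichlet problem $\Delta h=0$ in $B_r(x_0)$, $h=u^{\star}$ on $\partial B_r(x_0)$; continuity of $u^{\star}$ guarantees a unique $h\in C^2(B_r(x_0))\cap C(\overline{B_r(x_0)})$. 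Since $h$ is harmonic, it equals its spherical average, so it suffices to prove $u^{\star}(x_0)\le h(x_0)$, i.e. $u(x_0,y)\le h(x_0)$ for every $y>0$.

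The key step is to compare $u(x,y)$, the harmonic extension of $u_0$ to the upper half-space, with $h(x)$ viewed as a ($y$-independent) harmonic function on the cylinder $\Omega_Y=B_r(x_0)\times(0,Y)$ for a fixed but arbitrary height $Y>0$. Set $v(x,y):=u(x,y)-h(x)$; then $v\in C^2(\Omega_Y)\cap C(\overline{\Omega_Y})$ and $\Delta v=0$ in $\Omega_Y$ (here $\Delta$ is the full $(d+1)$-dimensional Laplacian, and $h$ is harmonic in the $x$-variables alone, which is exactly what is needed). By the maximum principle for harmonic functions, $\max_{\overline{\Omega_Y}}v$ is attained on $\partial\Omega_Y$, which splits into the lateral boundary $\partial B_r(x_0)\times[0,Y]$, the bottom $\overline{B_r(x_0)}\times\{y=0\}$, and the top $\overline{B_r(x_0)}\times\{y=Y\}$. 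On the lateral boundary $v(x,y)=u(x,y)-u^{\star}(x)\le 0$ by definition of $u^{\star}$. On the bottom $v(x,0)=u_0(x)-u^{\star}(x)\le 0$ since $u^{\star}\ge u_0$ everywhere. The top face is the only place where positivity of $v$ could occur, and this is the step I expect to be the main obstacle.

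To handle the top face I would not use a fixed $Y$ but let $Y\to\infty$: since $u_0\in L^p$ (or is bounded), one has $u(x,y)=(u_0*P_y)(x)\to 0$ as $y\to\infty$, uniformly for $x$ in the compact set $\overline{B_r(x_0)}$ — from $\|P_y\|_{p'}\to 0$ and Hölder in the $L^p$ case, or directly from $\|P_y\|_1=1$ and an approximate-identity/decay argument in the bounded-Lipschitz case. Meanwhile $h$ is bounded below on $\overline{B_r(x_0)}$ by some constant $-C_0$. Hence for $Y$ large enough, $v(x,Y)=u(x,Y)-h(x)\le \varepsilon$ on the top face; actually it is cleaner to argue that given any $y>0$ we may choose $Y>y$ so large that the top-face contribution is $\le\varepsilon$, apply the maximum principle on $\Omega_Y$ to get $v(x_0,y)\le\varepsilon$, and then let $\varepsilon\to 0$. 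Alternatively, one can replicate verbatim the device in the proof of Lemma \ref{lem5}: suppose $v$ attains a positive maximum at an interior-in-$x$ point of the top face; slide $Y$ and use that the maximum on the larger cylinder still cannot exceed this value, forcing $v\le 0$ on the top as well.

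Once $v\le 0$ on all of $\overline{\Omega_Y}$ for suitable $Y$, in particular $u(x_0,y)\le h(x_0)$ for the arbitrary $y$ we fixed; taking the supremum over $y>0$ gives $u^{\star}(x_0)\le h(x_0)$, and since $h$ is harmonic,
\[
u^{\star}(x_0)\le h(x_0)=\frac{1}{\sigma_{d-1}}\int_{S^{d-1}}h(x_0+r\xi)\,{\rm d}\sigma(\xi)=\frac{1}{\sigma_{d-1}}\int_{S^{d-1}}u^{\star}(x_0+r\xi)\,{\rm d}\sigma(\xi),
\]
which is precisely the sub-mean-value inequality defining subharmonicity of $u^{\star}$ on $A$. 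The only genuinely new ingredient compared to the heat-kernel argument is controlling the top face of the cylinder via decay of the Poisson extension at infinity; everything else is a routine transcription, so I would keep the write-up short and refer back to the proof of Lemma \ref{lem5} for the shared structure.
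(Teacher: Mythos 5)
Your overall strategy (cylinder $B_r(x_0)\times(0,Y)$, comparison with the harmonic replacement $h$, maximum principle for Laplace's equation) is a reasonable transplant of Lemma \ref{lem5}, but as written it has a genuine gap on the \emph{bottom} face, which is precisely the delicate point of the heat-kernel proof. With $v(x,y)=u(x,y)-h(x)$ you have $v(x,0)=u_0(x)-h(x)$, not $u_0(x)-u^{\star}(x)$: the function $h$ coincides with $u^{\star}$ only on $\partial B_r(x_0)$, and there is no a priori reason why the harmonic extension of the boundary values of $u^{\star}$ should dominate $u_0$ inside the ball. In Lemma \ref{lem5} the paper does not claim this either; it argues that if $\max_{\overline{B_r(x_0)}}v(\cdot,0)$ were positive at some $y_0$, the maximum principle would force $v(y_0,t)\le v(y_0,0)$, hence $u(y_0,t)\le u_0(y_0)$ for all $t\le T$, and arbitrariness of $T$ gives $u^{\star}(y_0)=u_0(y_0)$, contradicting $y_0\in A$. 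You would need to run that same device on the bottom face here (it does adapt, provided the top face is controlled for all large $Y$); instead your write-up declares the bottom face trivially $\le 0$ and reserves the ``Lemma \ref{lem5} device'' for the top face, so the argument as stated does not close.

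The top face is also only half-handled. For $u_0\in C(\R^d)\cap L^p(\R^d)$ with $p<\infty$ the uniform decay $u(x,y)\le\|u_0\|_p\,\|P_y\|_{p'}\to 0$ is correct, and since $h\ge 0$ this makes the top contribution $\le\varepsilon$. But for merely bounded Lipschitz $u_0$ the claimed decay is false: for $u_0\equiv 1$ one has $u(x,y)\equiv 1$ for every $y$, and in general the Poisson extension of a bounded function does not tend to zero, so the second hypothesis of the lemma is not covered by your argument. A repair is possible --- for instance, show that for large $Y$ one has $u(x,Y)\le \min_{\partial B_r(x_0)}u^{\star}+\varepsilon\le h(x)+\varepsilon$ on $\overline{B_r(x_0)}$, using $\|P_Y(\cdot-a)-P_Y\|_1\to 0$ uniformly for $|a|\le 2r$ together with $u(x_b,Y)\le u^{\star}(x_b)$ at boundary points $x_b$ --- but it must be supplied. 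For comparison, the paper's proof avoids the cylinder and both boundary issues altogether: for $x_0\in A$ the supremum defining $u^{\star}(x_0)$ is attained for $y\ge\delta(x_0)>0$, so one may apply the mean value property of $u$ over $(d+1)$-dimensional balls $B_r(x_0,y_0)\subset\R^d\times(0,\infty)$, deduce the weighted sub-mean-value inequality \eqref{Lem10_eq3} for $u^{\star}$, and then upgrade it to ordinary subharmonicity by comparison with the harmonic replacement; no control at $y=0$ or $y=\infty$ is needed there.
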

\begin{proof}
From Lemma \ref{lem9} we see that $u^{\star}$ is continuous and $A$ is in fact an open set. In what follows we keep denoting by $B_r(x)$ the open $d$-dimensional ball centered in $x$ with radius $r$, and now we introduce $B_r(x,y)$ to denote the open $(d+1)$-dimensional ball centered in $(x,y)$ with radius $r$.

\medskip

Let $x_0 \in A$. Since $u_0(x_0) < u^{\star}(x_0)$ and
$$\lim_{y \to 0^+} u(x_0,y) = u_0(x_0)\,,$$
there exists $\delta = \delta(x_0) >0$ such that if $y <\delta$ then 
\begin{equation}\label{Lem10_eq0}
u(x_0,y) < u^{\star}(x_0) - \tfrac{1}{2} (u^{\star}(x_0) - u_0(x_0)).
\end{equation}
Now let $y_0 \geq \delta(x_0)$. Choose a radius $0< r_0 < \delta $ such that $\overline{B_{r_0}(x_0)} \subset A$. For any $r < r_0$ we have $\overline{B_{r}(x_0,y_0)} \subset A \times (0,\infty)$. Recall that the function $u(x,y)$ is harmonic in $\R^d \times (0,\infty)$ and therefore, by the mean value property, we have

\begin{equation}\label{Lem10_eq1}
u(x_0,y_0) = \frac{1}{r^{d+1}\omega_{d+1}} \int_{B_{r}(x_0,y_0)} u(x,y)\,\dx\,\dy \,,
\end{equation}
where $\omega_{d+1}$ denotes the volume of the $(d+1)$-dimensional unit ball. From \eqref{Lem10_eq1} we arrive at
\begin{align}\label{Lem10_eq2}
\begin{split}
u(x_0,y_0) & \leq \frac{1}{r^{d+1}\omega_{d+1}} \int_{B_{r}(x_0,y_0)} u^{\star}(x)\,\dx\,\dy\\
& = \frac{1}{r^{d+1}\omega_{d+1}} \int_{B_{r}(x_0)} 2\, \sqrt{r^2 - |x-x_0|^2}\,\, u^{\star}(x)\,\dx.
\end{split}
\end{align}
By \eqref{Lem10_eq0} we know that $u^{\star}(x_0) = \sup_{y \geq \delta} u(x_0,y)$ and since \eqref{Lem10_eq2} holds for any $y_0 \geq \delta$, we have 
\begin{equation}\label{Lem10_eq3}
u^{\star}(x_0) \leq \frac{1}{r^{d+1}\omega_{d+1}} \int_{B_{r}(x_0)} 2\, \sqrt{r^2 - |x-x_0|^2}\,\, u^{\star}(x)\,\dx
\end{equation}
for any $r < r_0 = r_0(x_0)$. 

\medskip

Condition \eqref{Lem10_eq3} can be viewed as a ``weighted" subharmonicity. We shall prove that it actually implies subharmonicity in the usual sense. First observe that \eqref{Lem10_eq3} is sufficient to establish the maximum principle for $u^{\star}$ in the domain $A$ (in each connected component to be precise), i.e. if $\Omega$ is an open connected set such that $\overline{\Omega} \subset A$, and $u^{\star}$ has an interior maximum in $\overline{\Omega}$ then $u^{\star}$ must be constant in $\overline{\Omega}$. With this in hand, consider any ball $\overline{B_s(x_0)} \subset A$ and let $h:\overline{B_s(x_0)} \to \R$ be the solution of the Dirichlet boundary value problem
\begin{equation*}
\left\{
\begin{array}{rl}
\Delta h = 0& \ {\rm in} \ B_s(x_0);\\
h = u^{\star}& \ {\rm in} \ \partial B_s(x_0).
\end{array}
\right.
\end{equation*}
Consider the function $g= u^{\star} - h$. Let us prove that $g$ satisfies the same local ``weighted" subharmonicity \eqref{Lem10_eq3} in $B_s(x_0)$. In fact, for a given $x_1 \in B_s(x_0)$, we know that \eqref{Lem10_eq3} holds in a neighborhood of $x_1$. Therefore, we can find a radius $r_1$ such that $\overline {B_{r_1}(x_1)} \subset B_s(x_0)$ and \eqref{Lem10_eq3} holds for $r < r_1$. Using the fact that $h$ is harmonic and that our weight is a radial function we arrive at
\begin{align*}
g(x_1) & = u^{\star}(x_1) - h(x_1)\\
& \leq  \left(\frac{1}{r^{d+1}\omega_{d+1}} \int_{B_{r}(x_1)} 2\, \sqrt{r^2 - |x-x_1|^2}\,\, u^{\star}(x)\,\dx\right)  - h(x_1)\\
& = \left(\frac{1}{r^{d+1}\omega_{d+1}} \int_{B_{r}(x_1)} 2\, \sqrt{r^2 - |x-x_1|^2}\,\, \big\{ u^{\star}(x) - h(x)\big\} \,\dx\right) \\
& = \left(\frac{1}{r^{d+1}\omega_{d+1}} \int_{B_{r}(x_1)} 2\, \sqrt{r^2 - |x-x_1|^2}\,\, g(x) \,\dx\right).
\end{align*}
By the maximum principle, since $g$ is continuous in $\overline{B_s(x_0)}$, the maximum of $g$ in $\overline{B_s(x_0)}$ must be attained on the boundary. However, $g = 0$ in $\partial B_s(x_0)$, and therefore 
$$u^{\star}(x_0) \leq h(x_0),$$
which shows that $u^{\star}$ is subharmonic (in the usual sense, defined after Lemma \ref{lem4}) since $h$ is harmonic and thus equal to its average over the sphere $\partial B_s(x_0)$, where $h = u^{\star}$ by construction.
\end{proof}

\subsection{Proof of Theorem \ref{thm_Poisson}} Having established the subharmonicity of $u^{\star}$ in the disconnecting set (Lemma \ref{lem10}), the proof of Theorem \ref{thm_Poisson} follows essentially in the same way as the proof of Theorem \ref{thm_cont}, by observing that the Poisson kernel also satisfies the semigroup property $P_{y_1} * P_{y_2} = P_{y_1 + y_2}$ to reduce to the Lipschitz case as in Lemma \ref{lem7}. We shall omit the details.

\section{Acknowledgements}
We would like to thank the following colleagues for helpful remarks during the preparation of this manuscript: William Beckner, Luis Caffarelli, Kevin Hughes, Milton Jara, Diego Moreira, Roberto Oliveira, Lillian Pierce, Luis Silvestre and Ralph Teixeira. E. C. acknowledges support from CNPq-Brazil grants $473152/2011-8$ and $302809/2011-2$, and FAPERJ grant $E-26/103.010/2012$. B. S. acknowledges support from CNPq-Brazil grants $302962/2011-5$, $474944/2010-7$, FAPERJ grant $E-26/102.940/2011$ and PRONEX Optimization.

\end{document}